\numberwithin{equation}{section}
\newtheorem{theorem}{Theorem}[section]
\newtheorem{lemma}[theorem]{Lemma}
\newtheorem{remark}[theorem]{Remark}
\newtheorem{definition}[theorem]{Definition}
\newtheorem{example}[theorem]{Example}
\title{Some
properties of Sadik transform and its applications of fractional-order
dynamical systems in control theory}
\author{
  Saleh S. Redhwan \thanks{ S.S. Redhwan, saleh.redhwan909@gmail.com.} \\
 Department of Mathematics\\
 Dr.Babasaheb Ambedkar Marathwada\\
University, Aurangabad, (M.S),431001, India\\
  \texttt{saleh.redhwan909@gmail.com} \\
  %% examples of more authors
   \And
 Sadikali L. Shaikh\\
  Department of Mathematics\\
 Maulana Azad College of arts, \\
 Science and Commerce \\
 Aurangabad, (M.S),431001, India\\
  \texttt{} \\
   \AND
  Mohammed S. Abdo$^1$$^,$$^2$\\
  $^1$Department of Mathematics\\
 Dr.Babasaheb Ambedkar Marathwada  University\\
Aurangabad, (M.S),431001, India\\
 $^2$Hodeidah University\\
 Al-Hodeidah, 3114, Yemen\\
  \texttt{msabdo1977@gmail.com} \\
  %% \And
  %% Coauthor \\
  %% Affiliation \\
  %% Address \\
  %% \texttt{email} \\
  %% \And
  %% Coauthor \\
  %% Affiliation \\
  %% Address \\
  %% \texttt{email} \\
}
\begin{document}
\maketitle

\begin{abstract}
In this paper, we study some new properties of Sadik transform such as
integration, time delay, initial value theorem, and final value theorem.
Moreover, we prove the theorem of Sadik transform for Caputo fractional
derivative and we also establish sufficient conditions for the existence of
the Sadik transform of Caputo fractional derivatives. At the end, the
fractional-order dynamical systems in control theory as application of this
transform is discussed, in addition, some numerical examples to justify our
results.
\end{abstract}

% keywords can be removed
\keywords{fractional differential equation \and
Caputo fractional derivative \and Integral transforms \and Sadik transform}

\section{Introduction}

The integral transformation method is excessively used to solve different
kinds of differential equations in a simple way. As integral transforms
converts differential equations into algebraic equation algebraic equations
are more simple than differential equations. In the literature, there are
several integral transforms and all of them are acceptable to resolve
numerous types differential equations. Recently some new integral transforms
were introduced, see \cite{HA,KE1,KE2,TE}. Recently Shaikh \cite%
{SSa1,Sa,SSa2,SSa3}, proposed a new integral transform that known as Sadik
transform. This transform is an unification of some famous transforms such
as Laplace transform, Sumudu transform, Kamal transform, Tarig and Laplace-
Carson transform and Elzaki transform. For example, Shaikh in \cite{Sa},
presented some properties of this transform, like the existing theorem of
Sadik transform and duality theorem. Further, the author proved that above
mentioned transforms are particular cases of Sadik transform. Shaikh in \cite%
{SSa2}, proved properties of Sadik transform for derivative of functions,
shifting theorem for Sadik transform. Also, the author obtained transfer
function of dynamical system in control theory using Sadik transform.
Moreover, he solved some applications in control theory by Sadik transform.
At the outsight, integral transform method is useful and effective tool for
solving fractional differential equations. But it is also true that all
types of fractional differential equations are not solvable by integral
transform technique see \cite{Ju,Ke,Gu} and the references therein. Abhale
and Pawar in \cite{NS}, proved some fundamental properties of Sadik
transform and used these properties of Sadik transform to solve first order
and second order ordinary differential equations.

On the other hand, the fractional calculus is a generalization of classical
differentiation and integration into non-integer order. Some fundamental
definitions of fractional derivatives were given by Riemann-Liouville,
Hadamard, Caputo, Hilfer, Liouville-Caputo, Gr\"{u}nwald-Letnikov, Riesz,
Coimbra and Weyl. The fractional derivatives describe the property of memory
and heredity of many materials. see \cite{KL1,KB}.

Fractional ordinary and partial differential equations, as a generalization
of classical integer order differential equations. Fractional differential
equations have enabled the investigation of multiple phenomena such as
diffusion processes, electrodynamics, fluid flow, elasticity and it
increasingly used to model problems in biology, viscoelasticity, fluid
mechanics, physics, engineering, and others applications \cite{KL1,KB,KJ,IP}.

In this paper, we introduce Sadik transform of fractional order (Caputo
derivative operator) and some new properties of this transform such as time
delay, initial and final value theorems are proved. Further, we established
a sufficient condition for the existence of Sadik transform of Caputo
fractional derivatives and have solved Caputo fractional differential
equations. Finally, the time-domain analysis of dynamical systems involving
fractional-order is presented to solving systems of control theory.

\section{preliminaries:}

In this section, we recall some notions, definitions and lemmas that used
through this paper. Let $[a,b]\subset
%TCIMACRO{\U{211d} }%
%BeginExpansion
\mathbb{R}
%EndExpansion
^{+}$ and $\mathcal{C}[a,b]$ be the space of all continuous functions $%
\varphi :[a,b]\longrightarrow
%TCIMACRO{\U{211d} }%
%BeginExpansion
\mathbb{R}
%EndExpansion
$ with the norm $\left\Vert \varphi \right\Vert _{\infty }=\max \{\left\vert
\varphi (t)\right\vert :$ $t\in \lbrack a,b]\}$ for any $\varphi \in
\mathcal{C}[a,b].$ Denote $L^{1}[a,b]$ the labesgue integrable functions
with the norm $\left\Vert \varphi \right\Vert
_{L^{1}}=\int_{a}^{b}\left\vert \varphi (t)\right\vert dt<\infty .$

\begin{definition}
\cite{KL1} Let $\varphi $ be a locally integrable function on $[0,+\infty )$%
. The Riemann-Liouville fractional integral of order $\gamma >0$ of function
$\varphi $ is given by
\begin{equation*}
I_{0^{+}}^{\gamma }\varphi (t)=\left\{
\begin{array}{c}
\frac{1}{\Gamma (\gamma )}\int_{0}^{t}(t-\tau )^{\gamma -1}\varphi (\tau
)d\tau ,\text{ }\qquad \gamma >0, \\
\varphi (t),\text{\qquad \qquad \qquad \qquad \qquad }\gamma =0.%
\end{array}%
\right.
\end{equation*}%
where $\Gamma (z)=\int_{0}^{\infty }e^{-t}t^{z-1}dt$ is a Gamma function of
Euler, $z\in
%TCIMACRO{\U{2102} }%
%BeginExpansion
\mathbb{C}
%EndExpansion
.$
\end{definition}

\begin{definition}
\cite{KL1} Let $n-1<\gamma <n\in
%TCIMACRO{\U{2115} }%
%BeginExpansion
\mathbb{N}
%EndExpansion
,$ and $\varphi (t)$ has absolutely continuous derivatives up to order $%
(n-1).$ Then the left sided Riemann-Liouville fractional derivative of order
$\gamma $ of $\varphi $ is defined by
\begin{eqnarray*}
D_{0^{+}}^{\gamma }\varphi (t) &=&\left( \frac{d}{dt}\right)
^{n}I_{0^{+}}^{n-\gamma }\varphi (t) \\
&=&\left( \frac{d}{dt}\right) ^{n}\frac{1}{\Gamma (n-\gamma )}%
\int_{0}^{t}(t-\tau )^{n-\gamma -1}\varphi (\tau )d\tau ,
\end{eqnarray*}%
where $n=[\gamma ]+1$ and $[\gamma ]$ denotes the integer part of the real
number $\gamma .$
\end{definition}

\begin{definition}
\cite{KL1} \label{d1} The left sided Caputo derivative of fractional order $%
\gamma $ $(n-1<\gamma <n\in
%TCIMACRO{\U{2115} }%
%BeginExpansion
\mathbb{N}
%EndExpansion
)$ is given by
\begin{equation*}
^{c}D_{0^{+}}^{\gamma }\varphi (t)=I_{0^{+}}^{n-\gamma }\frac{d^{n}}{dt^{n}}%
\varphi (t)=\frac{1}{\Gamma (n-\gamma )}\int_{0}^{t}(t-\tau )^{n-\gamma
-1}\varphi ^{(n)}(\tau )d\tau ,
\end{equation*}%
where the function $\varphi (t)$ has absolutely continuous derivatives up to
order $(n-1)$. In particular, if $0<\gamma <1$, we have%
\begin{equation*}
^{c}D_{0^{+}}^{\gamma }\varphi (t)=I_{0^{+}}^{1-\gamma }\frac{d}{dt}\varphi
(t)=\frac{1}{\Gamma (1-\gamma )}\int_{0}^{t}(t-\tau )^{-\gamma }\varphi
^{\prime }(\tau )d\tau .
\end{equation*}
\end{definition}

\begin{definition}
\label{d2} \cite{Sa} (Sadik transform) Assume that $\varphi $ is piecewise
continuous on the interval $[0,A]$ for any $A$ $>0$ and satisfies $|\varphi
(t)|\leq Ke^{at}$ when $t$ $\geq M$, for any real constant $a$, and some
positive constants $K$ and $M$. Then the Sadik transform of $\varphi (t)$ is
defined by%
\begin{equation*}
\Phi (v,\alpha ,\beta )=\mathcal{S}[\varphi (t)]=\frac{1}{v^{\beta }}%
\int_{0}^{\infty }e^{-tv^{\alpha }}\varphi (t)dt,
\end{equation*}
\end{definition}

where $v$ is complex variable, $\alpha $ is any non zero real number, and $%
\beta $ is any real number.

\begin{definition}
\cite{IP} (Mittag-Leffer function) Let $p,q\in \mathcal{%
%TCIMACRO{\U{2102} }%
%BeginExpansion
\mathbb{C}
%EndExpansion
}$, $Re(p)>0$, $Re(q)>0$. Then the Mittag-Leffer function of one variable is
given by%
\begin{equation*}
E_{p}(t)=\sum_{k=0}^{\infty }\frac{t^{k}}{\Gamma (pk+1)}.
\end{equation*}%
The Mittag-leffer function of two variables is given by
\begin{equation*}
E_{p,q}(t)=\sum_{k=0}^{\infty }\frac{t^{k}}{\Gamma (pk+q)}.
\end{equation*}
\end{definition}

\textbf{Properties: }\cite{Sa} Let $\Phi (v,\alpha ,\beta )$ be a Sadik
transform of $\varphi (t),$ i.e. $\mathcal{S}[\varphi (t)]=\Phi (v,\alpha
,\beta ).$ Then

\begin{enumerate}
\item If $\varphi (t)=1,$ then $\mathcal{S}[1]$ =$\frac{1}{v^{\alpha +\beta }%
}.$

\item If $\varphi (t)=t^{n},$ then $\mathcal{S}[t^{n}]=\frac{n!}{v^{n\alpha
+(\alpha +\beta )}}$.

\item If $\varphi (t)=e^{at}$, then $\mathcal{S}[e^{at}]=\frac{v^{-\beta }}{%
v^{\alpha }-a}.$

\item If $\varphi (t)=\sin (at),$ then $\mathcal{S}[\sin (at)]$=$\frac{%
av^{-\beta }}{v^{2\alpha }+a^{2}}.$
\end{enumerate}

\begin{lemma}
\cite{Sa} \label{t2} Let $\varphi _{1}$ and $\varphi _{2}$ two functions
belong to $L^{1}[a,b]$ the usuall convolution product is given by
\end{lemma}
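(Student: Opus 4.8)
The statement couples two things: the definition of the (causal) convolution $(\varphi_1 * \varphi_2)(t)=\int_0^t \varphi_1(\tau)\,\varphi_2(t-\tau)\,d\tau$, and the corresponding convolution identity for the Sadik transform, which should read $\mathcal{S}[(\varphi_1 * \varphi_2)(t)] = v^{\beta}\,\Phi_1(v,\alpha,\beta)\,\Phi_2(v,\alpha,\beta)$, where $\Phi_i=\mathcal{S}[\varphi_i]$. The plan is to reduce everything to a double integral and a Fubini-type interchange, exactly as in the classical Laplace case, keeping careful track of the normalising factor $v^{-\beta}$ that distinguishes the Sadik transform.

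First I would record that the convolution is well defined: since $\varphi_1,\varphi_2\in L^1[a,b]$ (and, for the transform step, of exponential order as in Definition~\ref{d2}), the map $t\mapsto\int_0^t\varphi_1(\tau)\varphi_2(t-\tau)\,d\tau$ is itself locally integrable, by the standard $L^1$ estimate $\|\varphi_1*\varphi_2\|_{L^1}\le\|\varphi_1\|_{L^1}\|\varphi_2\|_{L^1}$ (Young's inequality). Next, I would write out
\begin{equation*}
\mathcal{S}[(\varphi_1*\varphi_2)(t)] = \frac{1}{v^{\beta}}\int_0^{\infty} e^{-tv^{\alpha}}\!\left(\int_0^{t}\varphi_1(\tau)\,\varphi_2(t-\tau)\,d\tau\right)dt ,
\end{equation*}
interchange the order of integration over the region $\{0\le\tau\le t<\infty\}$, and then substitute $u=t-\tau$ (so $t=u+\tau$, $dt=du$ and $e^{-tv^{\alpha}}=e^{-\tau v^{\alpha}}e^{-uv^{\alpha}}$). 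This splits the integral as a product:
\begin{equation*}
\mathcal{S}[(\varphi_1*\varphi_2)(t)] = \frac{1}{v^{\beta}}\left(\int_0^{\infty} e^{-\tau v^{\alpha}}\varphi_1(\tau)\,d\tau\right)\!\left(\int_0^{\infty} e^{-uv^{\alpha}}\varphi_2(u)\,du\right)= \frac{1}{v^{\beta}}\,\bigl(v^{\beta}\Phi_1\bigr)\bigl(v^{\beta}\Phi_2\bigr)= v^{\beta}\,\Phi_1\,\Phi_2 ,
\end{equation*}
using that $\int_0^{\infty} e^{-tv^{\alpha}}\varphi_i(t)\,dt=v^{\beta}\Phi_i(v,\alpha,\beta)$ from Definition~\ref{d2}.

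The only genuine obstacle is the legitimacy of the interchange of integrals. I would justify it with the Fubini--Tonelli theorem applied to $(\tau,t)\mapsto e^{-\Re(v^{\alpha})\,t}\,|\varphi_1(\tau)|\,|\varphi_2(t-\tau)|$ on $\{0\le\tau\le t\}$: after the same change of variables this majorant factors into $\bigl(\int_0^\infty e^{-\Re(v^\alpha)\tau}|\varphi_1(\tau)|d\tau\bigr)\bigl(\int_0^\infty e^{-\Re(v^\alpha)u}|\varphi_2(u)|du\bigr)$, which is finite provided $\Re(v^{\alpha})$ exceeds the exponential growth rates of $\varphi_1$ and $\varphi_2$ — precisely the domain on which the Sadik transforms $\Phi_1,\Phi_2$ exist. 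Once absolute integrability over the product region is established, Fubini permits both the swap of order and the factorisation, and the computation above goes through verbatim; restricting to the common region of convergence of $\Phi_1$ and $\Phi_2$ then yields the claimed identity.
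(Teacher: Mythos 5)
You have not actually proved the statement you were given: Lemma~\ref{t2} is nothing more than the \emph{definition} of the convolution product (quoted from \cite{Sa}), and accordingly the paper offers no proof of it at all --- there is nothing to prove. What you have written is instead a proof of the \emph{next} result, Lemma~\ref{L2}, the convolution identity $\mathcal{S}[(\varphi_1\ast\varphi_2)(t)]=v^{\beta}\Phi_1(v,\alpha,\beta)\Phi_2(v,\alpha,\beta)$, which the paper also cites from \cite{Sa} without proof. Taken as a proof of that identity, your argument is the standard and correct one: write the transform of the convolution as a double integral, justify the interchange by Fubini--Tonelli using the exponential-order majorant, substitute $u=t-\tau$, and factor, keeping track of the $v^{-\beta}$ normalisation so that the product of the two full integrals $\int_0^\infty e^{-tv^\alpha}\varphi_i(t)\,dt=v^{\beta}\Phi_i$ produces exactly one surviving factor of $v^{\beta}$. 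Two small points of friction with the paper's statement are worth flagging: the paper writes the convolution as $\int_{-\infty}^{\infty}\varphi_1(\tau)\varphi_2(t-\tau)\,d\tau$, which agrees with your $\int_0^t$ only under the implicit convention that both functions vanish for negative arguments (the causal setting of the Sadik transform), and the hypothesis ``$\varphi_1,\varphi_2\in L^1[a,b]$'' on a bounded interval is not by itself enough for the transform computation --- you correctly supplement it with the exponential-order condition of Definition~\ref{d2}, which is what actually makes the Tonelli majorant finite.
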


\begin{equation*}
(\varphi _{1}\ast \varphi _{2})(t)=\int_{-\infty }^{\infty }\varphi
_{1}(\tau )\varphi _{2}(t-\tau )d\tau ,\text{ }t>0.
\end{equation*}

\begin{lemma}
\label{L2} \cite{Sa} Let $\Phi _{1}(v,\alpha ,\beta )$ and $\Phi
_{2}(v,\alpha ,\beta )$ are Sadik Transforms of $\varphi _{1}(t)$ and $%
\varphi _{2}(t)$ respectively, and $(\varphi _{1}\ast \varphi _{2})(t)$ is a
convolution \ of $\varphi _{1}(t)$ and $\varphi _{2}(t).$ Then, Sadik
transform of $(\varphi _{1}\ast \varphi _{2})(t)$\ is
\begin{equation*}
\mathcal{S[}(\varphi _{1}\ast \varphi _{2})(t)]=v^{\beta }\Phi _{1}(v,\alpha
,\beta ).\Phi _{2}(v,\alpha ,\beta ),
\end{equation*}%
where $\ast $ denotes convolution.
\end{lemma}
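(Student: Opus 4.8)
The plan is to reduce the identity to the classical Laplace convolution theorem. Observe first that Definition \ref{d2} expresses the Sadik transform as a rescaled Laplace transform: writing $\mathcal{L}[\varphi](s)=\int_{0}^{\infty}e^{-st}\varphi(t)\,dt$, we have $\Phi(v,\alpha ,\beta)=v^{-\beta}\,\mathcal{L}[\varphi](v^{\alpha})$, equivalently $\mathcal{L}[\varphi](v^{\alpha})=v^{\beta}\Phi(v,\alpha ,\beta)$. Since $\varphi_{1},\varphi_{2}$ vanish on $(-\infty,0)$, Lemma \ref{t2} reduces to the causal convolution $(\varphi_{1}\ast\varphi_{2})(t)=\int_{0}^{t}\varphi_{1}(\tau)\varphi_{2}(t-\tau)\,d\tau$, and the classical result $\mathcal{L}[\varphi_{1}\ast\varphi_{2}]=\mathcal{L}[\varphi_{1}]\cdot\mathcal{L}[\varphi_{2}]$ gives the conclusion after substituting $s=v^{\alpha}$ and dividing by $v^{\beta}$.

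For a self-contained argument (which is what I would actually write out), I would compute directly. First insert the convolution into the definition of the Sadik transform:
\begin{equation*}
\mathcal{S}[(\varphi_{1}\ast\varphi_{2})(t)]=\frac{1}{v^{\beta}}\int_{0}^{\infty}e^{-tv^{\alpha}}\left(\int_{0}^{t}\varphi_{1}(\tau)\varphi_{2}(t-\tau)\,d\tau\right)dt.
\end{equation*}
Next interchange the order of integration over the region $\{(t,\tau):0\le\tau\le t<\infty\}$, rewriting this as $\frac{1}{v^{\beta}}\int_{0}^{\infty}\varphi_{1}(\tau)\bigl(\int_{\tau}^{\infty}e^{-tv^{\alpha}}\varphi_{2}(t-\tau)\,dt\bigr)d\tau$. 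The substitution $u=t-\tau$ in the inner integral, using $e^{-tv^{\alpha}}=e^{-uv^{\alpha}}e^{-\tau v^{\alpha}}$, separates the double integral into a product:
\begin{equation*}
\mathcal{S}[(\varphi_{1}\ast\varphi_{2})(t)]=\frac{1}{v^{\beta}}\left(\int_{0}^{\infty}e^{-\tau v^{\alpha}}\varphi_{1}(\tau)\,d\tau\right)\left(\int_{0}^{\infty}e^{-uv^{\alpha}}\varphi_{2}(u)\,du\right).
\end{equation*}
Finally, recognizing each factor as $v^{\beta}\Phi_{i}(v,\alpha ,\beta)$ and cancelling one power of $v^{\beta}$ yields $\mathcal{S}[(\varphi_{1}\ast\varphi_{2})(t)]=v^{\beta}\Phi_{1}(v,\alpha ,\beta)\Phi_{2}(v,\alpha ,\beta)$, as claimed.

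The only step that is more than bookkeeping is justifying the interchange of the two integrals (and the convergence of everything in sight). Here I would invoke the growth hypothesis $|\varphi_{i}(t)|\le K_{i}e^{a_{i}t}$ from Definition \ref{d2}: for $\operatorname{Re}(v^{\alpha})$ sufficiently large the function $e^{-tv^{\alpha}}\varphi_{1}(\tau)\varphi_{2}(t-\tau)$ is absolutely integrable over the quadrant $\{0\le\tau\le t<\infty\}$, so the Fubini–Tonelli theorem applies and every manipulation above is legitimate; the resulting identity between two functions of $v$ then propagates to the whole common region of convergence by analytic continuation. I expect this convergence/Fubini bookkeeping to be the main (and essentially the sole) obstacle, the rest being a routine change of variables.
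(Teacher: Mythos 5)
Your proposal is correct. Note that the paper itself does not prove Lemma \ref{L2}; it is quoted from the reference \cite{Sa} without proof, so there is no in-paper argument to compare against. Your direct computation --- insert the causal convolution into the definition, interchange the order of integration over $\{0\le\tau\le t<\infty\}$, substitute $u=t-\tau$, and split the exponential --- is the standard argument and is exactly the same Fubini-plus-substitution technique the paper does use in its proof of Theorem \ref{th2}, so it fits the paper's framework; your reduction to the classical Laplace convolution theorem via $\Phi(v,\alpha,\beta)=v^{-\beta}\mathcal{L}[\varphi](v^{\alpha})$ is an equally valid shortcut, and your attention to the Fubini justification under the exponential-order hypothesis of Definition \ref{d2} is the right place to put the only nontrivial care.
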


\begin{lemma}
\label{h} \cite{NS} Let $\mathcal{S}[\varphi (t)]=\Phi (v,\alpha ,\beta ).$
Then%
\begin{equation*}
\mathcal{S}[t^{n}\varphi (t)]=(-1)^{n}\left( \frac{1}{\alpha v^{\alpha -1}}%
\frac{d}{dv}+\frac{\beta }{\alpha v^{\alpha }}\right) ^{n}\Phi (v,\alpha
,\beta ).
\end{equation*}
\end{lemma}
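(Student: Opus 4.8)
The plan is to induct on $n$, with the key computational input being differentiation under the integral sign applied to the defining integral
\[
\Phi(v,\alpha,\beta)=\frac{1}{v^{\beta}}\int_{0}^{\infty}e^{-tv^{\alpha}}\varphi(t)\,dt.
\]
First I would establish the $n=1$ case. Differentiating $\Phi$ with respect to $v$, I get a contribution from the $v^{-\beta}$ prefactor and a contribution from $\partial_v e^{-tv^{\alpha}} = -\alpha v^{\alpha-1} t\, e^{-tv^{\alpha}}$ inside the integral. Rearranging,
\[
-\Bigl(\tfrac{1}{\alpha v^{\alpha-1}}\tfrac{d}{dv}+\tfrac{\beta}{\alpha v^{\alpha}}\Bigr)\Phi(v,\alpha,\beta)
= \frac{1}{v^{\beta}}\int_{0}^{\infty}e^{-tv^{\alpha}}\,t\,\varphi(t)\,dt
= \mathcal{S}[t\varphi(t)],
\]
which is exactly the claimed identity for $n=1$. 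The differentiation under the integral is justified by the growth hypothesis $|\varphi(t)|\le Ke^{at}$ in Definition \ref{d2} (for $\operatorname{Re}(v^{\alpha})$ large enough the integrand and its $v$-derivative are dominated by an integrable function), and I would note this as the standard regularity condition rather than belabor it.

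For the inductive step, suppose the formula holds for some $n\ge 1$. Apply the $n=1$ case to the function $\psi(t):=t^n\varphi(t)$ in place of $\varphi(t)$: this gives
\[
\mathcal{S}[t\cdot t^n\varphi(t)] = -\Bigl(\tfrac{1}{\alpha v^{\alpha-1}}\tfrac{d}{dv}+\tfrac{\beta}{\alpha v^{\alpha}}\Bigr)\mathcal{S}[t^n\varphi(t)].
\]
Then substitute the induction hypothesis $\mathcal{S}[t^n\varphi(t)] = (-1)^n\bigl(\tfrac{1}{\alpha v^{\alpha-1}}\tfrac{d}{dv}+\tfrac{\beta}{\alpha v^{\alpha}}\bigr)^n\Phi$, and the operator composes to yield the power $n+1$ with sign $(-1)^{n+1}$, completing the induction.

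The main obstacle, such as it is, is bookkeeping: one must verify that the first-order operator $\mathcal{L}:=\tfrac{1}{\alpha v^{\alpha-1}}\tfrac{d}{dv}+\tfrac{\beta}{\alpha v^{\alpha}}$ genuinely satisfies $\mathcal{S}[t\,\eta(t)] = -\mathcal{L}\,\mathcal{S}[\eta(t)]$ for an \emph{arbitrary} admissible $\eta$, not just for $\eta=\varphi$ — this is what makes the induction go through, and it follows because the derivation of the $n=1$ case used nothing about $\varphi$ beyond its admissibility, and $t^n\varphi(t)$ is still admissible (its growth is $O(t^n e^{at})=O(e^{(a+\varepsilon)t})$). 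A secondary point worth a sentence is that $\mathcal{L}^{n}$ as written means the $n$-fold composition of $\mathcal{L}$, so no commutativity issue arises. I do not expect any genuine difficulty beyond making these justifications explicit.
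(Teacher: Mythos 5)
Your proof is correct. Note that the paper itself does not prove Lemma \ref{h} at all --- it is imported from the reference [NS] without proof --- so there is no in-paper argument to compare against; your route (differentiate $\Phi(v,\alpha,\beta)=v^{-\beta}\int_0^\infty e^{-tv^{\alpha}}\varphi(t)\,dt$ under the integral sign to get $\mathcal{S}[t\varphi(t)]=-\bigl(\tfrac{1}{\alpha v^{\alpha-1}}\tfrac{d}{dv}+\tfrac{\beta}{\alpha v^{\alpha}}\bigr)\Phi$, then induct by applying this to the still-admissible function $t^{n}\varphi(t)$) is the standard derivation and is complete, including the two points that actually need saying: that the $n=1$ identity holds for any admissible integrand, and that the $n$-th power denotes $n$-fold composition of the first-order operator.
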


\section{Main results}

In this section, we prove a new some properties of Sadik transform that is,
the Mittag-Liffler function, the integration, the time delay, the initial
and final value theorems. Moreover, we demonstrate the Sadik transform of
Caputo fractional differential equations, and the existence theorem of Sadik
transform.

\begin{theorem}
\label{L1} Let $\Phi (v,\alpha ,\beta )$ is a Sadik transform of $\varphi
(t) $ and $\varphi (t),$ $\varphi ^{\prime }(t),$ $\varphi ^{\prime \prime
}(t)$,$.... $,$\varphi ^{(n-1)}(t)$ are continuous on $[0,\infty)$. Then%
\begin{equation*}
\mathcal{S}[\varphi ^{(n)}(t)]=v^{n\alpha }\Phi (v,\alpha ,\beta
)-\sum_{k=0}^{n-1}v^{k\alpha -\beta }\varphi ^{(n-1-k)}(0).
\end{equation*}
\end{theorem}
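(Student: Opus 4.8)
The plan is to prove the formula by induction on $n$, using integration by parts for the base case and the convolution/derivative structure of the Sadik transform to pass from $n$ to $n+1$.

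For the base case $n=1$, I would start directly from the definition
\[
\mathcal{S}[\varphi'(t)] = \frac{1}{v^{\beta}}\int_{0}^{\infty} e^{-tv^{\alpha}}\varphi'(t)\,dt,
\]
and integrate by parts with $u = e^{-tv^{\alpha}}$, $dv = \varphi'(t)\,dt$. This gives
\[
\mathcal{S}[\varphi'(t)] = \frac{1}{v^{\beta}}\Bigl[ e^{-tv^{\alpha}}\varphi(t)\Bigr]_{0}^{\infty} + \frac{v^{\alpha}}{v^{\beta}}\int_{0}^{\infty} e^{-tv^{\alpha}}\varphi(t)\,dt.
\]
The boundary term at $\infty$ vanishes because of the exponential-order hypothesis $|\varphi(t)|\le Ke^{at}$ (choosing $\mathrm{Re}(v^{\alpha}) > a$ as in the existence requirements for the Sadik transform), the boundary term at $0$ contributes $-\frac{1}{v^{\beta}}\varphi(0) = -v^{0\cdot\alpha - \beta}\varphi(0)$, and the remaining integral is exactly $v^{\alpha}\Phi(v,\alpha,\beta)$. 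This matches the claimed formula for $n=1$.

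For the inductive step, assume the formula holds for $n$. I would apply the $n=1$ result to the function $\psi(t) := \varphi^{(n)}(t)$, whose derivative is $\varphi^{(n+1)}(t)$, to get $\mathcal{S}[\varphi^{(n+1)}(t)] = v^{\alpha}\mathcal{S}[\varphi^{(n)}(t)] - v^{-\beta}\varphi^{(n)}(0)$. Then substitute the induction hypothesis for $\mathcal{S}[\varphi^{(n)}(t)]$, distribute the factor $v^{\alpha}$ through the sum, and re-index: the term $v^{\alpha}\cdot v^{k\alpha - \beta}\varphi^{(n-1-k)}(0) = v^{(k+1)\alpha - \beta}\varphi^{(n-1-k)}(0)$ becomes, with $j = k+1$, the term $v^{j\alpha - \beta}\varphi^{((n+1)-1-j)}(0)$ for $j = 1,\dots,n$, and the extra boundary term $-v^{-\beta}\varphi^{(n)}(0)$ supplies the missing $j=0$ term. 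Collecting gives exactly the formula with $n$ replaced by $n+1$.

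The only real subtlety — and the step I would flag as the main obstacle — is the justification that the boundary term $e^{-tv^{\alpha}}\varphi(t)$ vanishes as $t\to\infty$ and, more generally, that each integration by parts is legitimate (i.e. that $\varphi^{(k)}$ is itself of exponential order so the Sadik transform at each intermediate stage exists). This is implicitly covered by the hypotheses inherited from Definition~\ref{d2} together with the continuity assumptions on $\varphi,\varphi',\dots,\varphi^{(n-1)}$; I would state explicitly that we work in the region $\mathrm{Re}(v^{\alpha}) > a$ where all these transforms converge. Everything else is bookkeeping with the geometric factors $v^{k\alpha}$ and the re-indexing of the finite sum, which the induction handles cleanly.
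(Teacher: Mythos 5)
Your proposal is correct and follows essentially the same route as the paper: integration by parts gives the first-order formula $\mathcal{S}[\varphi'(t)]=v^{\alpha}\Phi(v,\alpha,\beta)-v^{-\beta}\varphi(0)$, and the general case is obtained by iterating it. The only difference is presentational — the paper works out $n=2,3$ explicitly and then asserts the general pattern, while you package the iteration as a clean induction with the re-indexing spelled out, which if anything is the more rigorous write-up.
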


\begin{proof}
For the first order derivative of $\varphi (t),$ we starting with the
definition of Sadik transform,%
\begin{equation*}
\mathcal{S}[\varphi ^{\prime }(t)]=v^{-\beta }\int_{0}^{\infty
}v^{-tv^{\alpha }}\varphi ^{\prime }(t)dt.
\end{equation*}%
By using integration of parts, we obtain,%
\begin{equation*}
=v^{-\beta }\left[ \left. v^{-tv^{\alpha }}\varphi (t)\right\vert
_{0}^{\infty }-\int_{0}^{\infty }v^{-tv^{\alpha }}(-v^{\alpha })\varphi (t)dt%
\right] ,
\end{equation*}%
Assuming $Re(v^{\alpha })>0$, we get%
\begin{eqnarray}
\mathcal{S}[\varphi ^{\prime }(t)] &=&-v^{-\beta }\varphi (0)+v^{\alpha }%
\frac{1}{v^{\beta }}\int_{0}^{\infty }v^{-tv^{\alpha }}\varphi (t)dt  \notag
\\
&=&v^{\alpha }\Phi (v,\alpha ,\beta )-v^{-\beta }\varphi (0).  \label{eq7}
\end{eqnarray}

By using a minner way of second order derivative of $\varphi (t)$ we get
\begin{eqnarray*}
\mathcal{S}[\varphi ^{\prime \prime }(t)] &=&v^{-\beta }\int_{0}^{\infty
}v^{-tv^{\alpha }}\varphi ^{\prime \prime }(t)dt \\
&=&v^{-\beta }\left[ \left. v^{-tv^{\alpha }}\varphi ^{\prime
}(t)\right\vert _{0}^{\infty }-\int_{0}^{\infty }v^{-tv^{\alpha
}}(-v^{\alpha })\varphi ^{\prime }(t)dt\right] \\
&=&-v^{-\beta }\varphi ^{\prime }(0)+v^{\alpha }v^{-\beta }\int_{0}^{\infty
}v^{-tv^{\alpha }}\varphi ^{\prime }(t)dt \\
&=&-v^{-\beta }\varphi ^{\prime }(0)+v^{\alpha }\mathcal{S}[\varphi ^{\prime
}(x)],
\end{eqnarray*}%
from Eq.(\ref{eq7}), we get%
\begin{eqnarray}
\mathcal{S}[\varphi ^{\prime \prime }(t)] &=&-v^{-\beta }\varphi ^{\prime
}(0)+v^{\alpha }\left[ v^{\alpha }U(v,\alpha ,\beta )-v^{-\beta }\varphi (0)%
\right]  \notag \\
&=&v^{2\alpha }U(v,\alpha ,\beta )-v^{\alpha -\beta }\varphi (0)-v^{-\beta
}\varphi ^{\prime }(0).  \label{eq6}
\end{eqnarray}

Similarly, for third order partial derivative of $\varphi (t)$ and using Eq.(%
\ref{eq6}), we get%
\begin{eqnarray*}
\mathcal{S}[\varphi ^{\prime \prime \prime }(t)] &=&v^{3\alpha }U(v,\alpha
,\beta )-v^{2\alpha -\beta }\varphi (0)-v^{\alpha -\beta }\varphi ^{\prime
}(0)-v^{-\beta }\varphi ^{\prime \prime }(0) \\
&=&v^{n\alpha }U(v,\alpha ,\beta )-\sum_{k=0}^{n-1}v^{k\alpha -\beta
}\varphi ^{(n-k-1)}(0),\text{ }n=3,k=0,1,2.
\end{eqnarray*}

Continually in the general, we get%
\begin{equation*}
\mathcal{S}[\varphi ^{(n)}(t)]=v^{n\alpha }U(v,\alpha ,\beta
)-\sum_{k=0}^{n-1}v^{k\alpha -\beta }\varphi ^{(n-k-1)}(0).
\end{equation*}
\end{proof}

\begin{lemma}
\label{L4} Let $\varphi (t)=t^{pm+q-1}E_{p,q}^{(m)}(\pm at^{p}).$ Then the
Sadik Transform of $\varphi (t)$ is given by%
\begin{equation*}
\frac{1}{v^{\beta }}\int_{0}^{\infty }e^{-v^{\alpha
}t}t^{pm+q-1}E_{p,q}^{(m)}(\pm at^{p})dt=\frac{m!v^{\alpha p-(\alpha q+\beta
)}}{(v^{\alpha p}\mp a)^{m+1}},
\end{equation*}%
where $\alpha ,\beta \in
%TCIMACRO{\U{2102} }%
%BeginExpansion
\mathbb{C}
%EndExpansion
,\mathcal{R}e(p)>0,$ $\mathcal{R}e(q)>0,$ $\mathcal{R}e(v)>\left\vert
a\right\vert ^{\frac{1}{\mathcal{R}e(\alpha p)}}$ and $E_{p,q}^{(m)}(t)=%
\frac{d^{m}}{dt^{m}}E_{p,q}(t).$

\begin{proof}
In view of definitions of Mittag-leffler function and Sadik transform with
the help of classical calculus, we have%
\begin{eqnarray*}
&&\frac{1}{v^{\beta }}\int_{0}^{\infty }e^{-v^{\alpha
}t}t^{pm+q-1}E_{p,q}^{(m)}(\pm at^{p})dt \\
&=&\frac{1}{v^{\beta }}\int_{0}^{\infty }e^{-v^{\alpha }t}t^{pm+q-1}\frac{%
d^{m}}{dt^{m}}\sum_{k=0}^{\infty }\frac{(\pm at^{p})^{k}}{\Gamma (pk+q)}dt \\
&=&\frac{1}{v^{\beta }}\int_{0}^{\infty }e^{-v^{\alpha
}t}t^{pm+q-1}\sum_{k=0}^{\infty }\frac{(k+m)!(\pm a)^{k}t^{pk}}{k!\Gamma
(pk+pm+q)}dt \\
&=&\sum_{k=0}^{\infty }\frac{(k+m)!(\pm a)^{k}}{k!\Gamma (p\left( k+m\right)
+q)}\frac{1}{v^{\beta }}\int_{0}^{\infty }e^{-v^{\alpha }t}t^{p\left(
m+k\right) +q-1}dt \\
&=&\frac{v^{-\alpha (pm+q)}}{v^{\beta }}\sum_{k=0}^{\infty }\frac{(k+m)!}{k!}%
\left( \frac{\pm a}{v^{\alpha p}}\right) ^{k} \\
&=&\frac{v^{-\alpha (pm+q)}}{v^{\beta }}\sum_{k=0}^{\infty
}(k+m).......(k+1)\left( \frac{\pm a}{v^{\alpha p}}\right) ^{k}.
\end{eqnarray*}%
Now let $k=k-m$, it follows that%
\begin{eqnarray}
&&\frac{1}{v^{\beta }}\int_{0}^{\infty }e^{-v^{\alpha
}t}t^{pm+q-1}E_{p,q}^{(m)}(\pm at^{p})dt  \notag \\
&=&\frac{v^{-\alpha (pm+q)}}{v^{\beta }}\sum_{k=m}^{\infty
}(k)(k-1)...(k-m-1)\left( \frac{\pm a}{v^{\alpha p}}\right) ^{k}  \notag \\
&=&v^{-\alpha pm-\alpha q-\beta }\frac{d^{m}}{da^{m}}\sum_{k=m}^{\infty
}\left( \frac{\pm a}{v^{\alpha p}}\right) ^{k}  \notag \\
&=&v^{-\alpha pm-\alpha q-\beta }\frac{d^{m}}{da^{m}}\sum_{k=m}^{\infty
}\left( \frac{1}{1\mp \frac{a}{v^{\alpha p}}}\right)  \notag \\
&=&v^{-\alpha pm-\alpha q-\beta }\frac{m!}{\left( 1\mp \frac{a}{v^{\alpha p}}%
\right) ^{m+1}}  \notag \\
&=&\frac{m!v^{\alpha p-(\alpha q+\beta )}}{(v^{\alpha p}\mp a)^{m+1}}.
\label{eq23}
\end{eqnarray}
\end{proof}
\end{lemma}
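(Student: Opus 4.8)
The plan is to expand the Mittag-Leffler function into its defining power series, differentiate it term by term $m$ times, transform each resulting monomial with the already-established power rule, and then sum the resulting numerical series in closed form.

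First I would use that $E_{p,q}(z)=\sum_{k\ge 0} z^{k}/\Gamma(pk+q)$ is entire in $z$, so it may be differentiated inside its disc of convergence, giving
\[
E_{p,q}^{(m)}(z)=\sum_{k=0}^{\infty}\frac{(k+1)(k+2)\cdots(k+m)}{\Gamma(pk+pm+q)}\,z^{k}=\sum_{k=0}^{\infty}\frac{(k+m)!}{k!}\,\frac{z^{k}}{\Gamma(p(k+m)+q)}.
\]
Substituting $z=\pm at^{p}$ yields $\varphi(t)=\sum_{k\ge 0}\frac{(k+m)!(\pm a)^{k}}{k!\,\Gamma(p(k+m)+q)}\,t^{p(k+m)+q-1}$. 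Then I would apply the Sadik transform termwise: from $\int_{0}^{\infty}e^{-v^{\alpha}t}t^{s-1}\,dt=\Gamma(s)\,v^{-\alpha s}$ (valid for $\mathcal{R}e(s)>0$, $\mathcal{R}e(v^{\alpha})>0$, which is just the power rule from the Properties list), each term contributes a factor $\Gamma(p(k+m)+q)$ that cancels the one in the denominator; pulling out $v^{-\alpha(pm+q)-\beta}$ leaves
\[
v^{-\alpha pm-\alpha q-\beta}\sum_{k=0}^{\infty}\frac{(k+m)!}{k!}\left(\frac{\pm a}{v^{\alpha p}}\right)^{k}.
\]

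Finally I would evaluate the numerical series. Writing $x=\pm a/v^{\alpha p}$ and using $\tfrac{(k+m)!}{k!}=m!\binom{k+m}{m}$ with the binomial series $\sum_{k\ge 0}\binom{k+m}{m}x^{k}=(1-x)^{-(m+1)}$, the sum is $m!\,(1\mp a/v^{\alpha p})^{-(m+1)}$. Clearing denominators via $(1\mp a/v^{\alpha p})^{m+1}=(v^{\alpha p}\mp a)^{m+1}v^{-\alpha p(m+1)}$ gives
\[
v^{-\alpha pm-\alpha q-\beta}\cdot\frac{m!\,v^{\alpha p(m+1)}}{(v^{\alpha p}\mp a)^{m+1}}=\frac{m!\,v^{\alpha p-(\alpha q+\beta)}}{(v^{\alpha p}\mp a)^{m+1}},
\]
which is the asserted identity. (Equivalently one may close the series as $\frac{d^{m}}{da^{m}}$ of a geometric series as in the excerpt, but the binomial-series route avoids reindexing the summation.)

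The step that actually needs care — and the one I expect to be the main obstacle — is the interchange of the Sadik transform (an improper integral) with the infinite sum defining $\varphi(t)$. This is legitimate precisely because $|x|=|a|\,|v|^{-\mathcal{R}e(\alpha p)}<1$ under the hypothesis $\mathcal{R}e(v)>|a|^{1/\mathcal{R}e(\alpha p)}$, so the series of absolute values $\sum_{k}\frac{(k+m)!}{k!}|a|^{k}t^{p(k+m)+q-1}e^{-\mathcal{R}e(v^{\alpha})t}$ has a finite integral; Tonelli's theorem then permits the exchange, after which the formal computation above is rigorous. Everything else is routine algebra with Gamma functions and powers of $v$.
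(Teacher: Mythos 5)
Your proposal is correct and its skeleton coincides with the paper's: expand $E_{p,q}^{(m)}(\pm at^{p})$ into its power series, apply the Sadik transform term by term via the power rule $\frac{1}{v^{\beta}}\int_{0}^{\infty}e^{-v^{\alpha}t}t^{s-1}\,dt=\Gamma(s)\,v^{-\alpha s-\beta}$, and cancel the Gamma factors. The only genuine divergence is the closing step: you sum $\sum_{k\ge 0}\frac{(k+m)!}{k!}x^{k}$ directly through the binomial series $\sum_{k\ge 0}\binom{k+m}{m}x^{k}=(1-x)^{-(m+1)}$, whereas the paper reindexes ($k\mapsto k-m$) and identifies the sum as $\frac{d^{m}}{da^{m}}$ of a geometric series; your route is cleaner and incidentally avoids the spots where the paper's displayed computation has slips (its falling factorial is written $(k)(k-1)\cdots(k-m-1)$ instead of $(k)(k-1)\cdots(k-m+1)$, and the line where the geometric series is summed still carries a spurious summation sign). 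You also add something the paper omits entirely: a Tonelli-type justification for interchanging the infinite sum with the improper integral under the stated restriction on $v$. One minor caveat there: for complex $\alpha$ and $p$ the identity $\lvert v^{\alpha p}\rvert=\lvert v\rvert^{\mathcal{R}e(\alpha p)}$ holds only up to a factor $e^{-\mathcal{I}m(\alpha p)\arg v}$, so your convergence bound is exact only when $\alpha p$ is real; but the lemma's hypotheses are stated at the same level of informality, so this does not affect the substance of the argument.
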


\begin{lemma}
(Integration) Let $\mathcal{S}[\varphi (t)]=\Phi (v,\alpha ,\beta )$ is a
Sadik transform of $\varphi (t).$ Then Sadik transform of integration of $%
\varphi (t)$ is
\begin{equation*}
\mathcal{S}\left[ \int_{0}^{t}\varphi (\tau )d\tau \right] =\frac{1}{%
v^{\alpha }}\Phi (v,\alpha ,\beta ).
\end{equation*}
\end{lemma}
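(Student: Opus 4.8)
The plan is to reduce the statement to the first-order case of Theorem~\ref{L1}. Set
$\psi(t)=\int_{0}^{t}\varphi(\tau)\,d\tau$. By the fundamental theorem of calculus, $\psi$ is continuous on $[0,\infty)$ with $\psi(0)=0$ and $\psi'(t)=\varphi(t)$, so $\psi$ meets the hypotheses needed to apply Theorem~\ref{L1} with $n=1$ (one checks that $\psi$ is of exponential order whenever $\varphi$ is, since $|\psi(t)|\le \int_0^t|\varphi(\tau)|\,d\tau$). Applying that theorem gives
\begin{equation*}
\mathcal{S}[\psi'(t)] = v^{\alpha}\,\mathcal{S}[\psi(t)] - v^{-\beta}\psi(0) = v^{\alpha}\,\mathcal{S}\!\left[\int_{0}^{t}\varphi(\tau)\,d\tau\right].
\end{equation*}
Since $\psi'(t)=\varphi(t)$, the left-hand side equals $\Phi(v,\alpha,\beta)$, and dividing by $v^{\alpha}$ (legitimate under the standing assumption $\mathcal{R}e(v^{\alpha})>0$, so $v^{\alpha}\neq 0$) yields the claimed formula.

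As an alternative route, I could argue directly from the definition: write
$\mathcal{S}\!\left[\int_{0}^{t}\varphi(\tau)\,d\tau\right]=\frac{1}{v^{\beta}}\int_{0}^{\infty}e^{-tv^{\alpha}}\bigl(\int_{0}^{t}\varphi(\tau)\,d\tau\bigr)dt$ and integrate by parts with $u=\int_{0}^{t}\varphi(\tau)\,d\tau$ and $dv=e^{-tv^{\alpha}}dt$, so that $du=\varphi(t)\,dt$ and $v=-\frac{1}{v^{\alpha}}e^{-tv^{\alpha}}$. The boundary term vanishes at $t=0$ (the integral is $0$) and at $t=\infty$ (the exponential decays against the polynomially/exponentially bounded antiderivative, using $\mathcal{R}e(v^{\alpha})>0$), leaving exactly $\frac{1}{v^{\alpha}}\cdot\frac{1}{v^{\beta}}\int_{0}^{\infty}e^{-tv^{\alpha}}\varphi(t)\,dt=\frac{1}{v^{\alpha}}\Phi(v,\alpha,\beta)$. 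A third option is to recognize $\int_{0}^{t}\varphi(\tau)\,d\tau$ as the convolution $(1\ast\varphi)(t)$ of the constant function $1$ with $\varphi$, and invoke Lemma~\ref{L2} together with the property $\mathcal{S}[1]=v^{-(\alpha+\beta)}$, which gives $v^{\beta}\cdot v^{-(\alpha+\beta)}\cdot\Phi=\frac{1}{v^{\alpha}}\Phi$.

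The only genuine subtlety, and the step I would be most careful about, is justifying the vanishing of the boundary term at infinity (equivalently, that $\psi$ satisfies the exponential-order condition in Definition~\ref{d2}): one must ensure that $e^{-tv^{\alpha}}\psi(t)\to 0$ as $t\to\infty$, which holds because $\psi$ inherits exponential order from $\varphi$ and $\mathcal{R}e(v^{\alpha})$ is taken large enough. Everything else is a one-line application of the already-established derivative theorem (or convolution theorem), so I would present the first argument as the main proof and perhaps remark on the convolution derivation.
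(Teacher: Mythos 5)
Your proposal is correct. Your main argument — setting $\psi(t)=\int_{0}^{t}\varphi(\tau)\,d\tau$, noting $\psi(0)=0$, $\psi'=\varphi$, checking that $\psi$ inherits exponential order, and then applying the $n=1$ case of Theorem~\ref{L1} to get $\Phi(v,\alpha,\beta)=\mathcal{S}[\psi']=v^{\alpha}\mathcal{S}[\psi]$ — differs from the paper, which works directly from Definition~\ref{d2} and integrates by parts once, with $u=\int_{0}^{t}\varphi(\tau)\,d\tau$ and the exponential as the other factor, discarding the boundary term and obtaining $\frac{1}{v^{\alpha}}\Phi(v,\alpha,\beta)$ immediately; that is precisely the second, ``alternative'' route you sketch. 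The two arguments are of course the same computation in disguise (the derivative theorem itself is proved by that integration by parts), but your reduction to Theorem~\ref{L1} is arguably cleaner: it reuses an already-established result, makes the role of $\psi(0)=0$ explicit, and isolates the only real analytic issue — the exponential order of $\psi$, hence the vanishing of $e^{-tv^{\alpha}}\psi(t)$ at infinity — which the paper's proof passes over silently. Your convolution observation, $\int_{0}^{t}\varphi(\tau)\,d\tau=(1\ast\varphi)(t)$ together with Lemma~\ref{L2} and $\mathcal{S}[1]=v^{-(\alpha+\beta)}$, is a third valid derivation not used by the paper and is a nice consistency check on the constant $v^{\beta}$ appearing in the convolution theorem.
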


\begin{proof}
According to the definition of Sadik transform, we have%
\begin{equation*}
\mathcal{S}\left[ \int_{0}^{t}\varphi (\tau )d\tau \right] =\frac{1}{%
v^{\beta }}\int_{0}^{\infty }e^{-tv^{\alpha }}\left[ \int_{0}^{t}\varphi
(\tau )d\tau \right] dt.
\end{equation*}

Then by the integration of parts, we get%
\begin{eqnarray*}
\mathcal{S}\left[ \int_{0}^{t}\varphi (\tau )d\tau \right] &=&\frac{1}{%
v^{\beta }}\left[ \left. \int_{0}^{t}\varphi (\tau )d\tau \frac{%
e^{-tv^{\alpha }}}{v^{\alpha }}\right\vert _{0}^{\infty }-\int_{0}^{\infty }%
\frac{e^{-tv^{\alpha }}}{-v^{\alpha }}\right] \varphi (t)dt \\
&=&\frac{1}{v^{\alpha }}\left[ \frac{1}{v^{\beta }}\int_{0}^{\infty
}e^{-tv^{\alpha }}\varphi (t)dt\right] \\
&=&\frac{1}{v^{\alpha }}\Phi (v,\alpha ,\beta ).
\end{eqnarray*}
\end{proof}

\begin{theorem}
\label{th2} Let $n-1<\gamma <n,$ $\left( n=[\gamma ]+1\right) $ and $\varphi
(t),\varphi ^{\prime }(t)$,$\varphi ^{\prime \prime }(t),.....,\varphi
^{(n-1)}(t)$ are continuous on $[0,\infty )$ and of exponential order, while
$^{c}D_{0^{+}}^{\gamma }\varphi (t)$ is piecewise continuous on $[0,\infty
). $ Then Sadik transform of Caputo fractional derivative of order $\gamma $
of function $\varphi $ is given by%
\begin{equation*}
\mathcal{S}[^{c}D_{0^{+}}^{\gamma }\varphi (t)]=v^{\gamma \alpha }\Phi
(v,\alpha ,\beta )-\sum_{k=0}^{n-1}v^{(\gamma -n+k)\alpha -\beta }\varphi
^{(n-1-k)}(0^{+}).
\end{equation*}
\end{theorem}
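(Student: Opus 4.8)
The plan is to reduce the Caputo derivative to a convolution and then combine the convolution theorem (Lemma~\ref{L2}) with the transform formula for an ordinary $n$-th derivative (Theorem~\ref{L1}). By Definition~\ref{d1},
\[
^{c}D_{0^{+}}^{\gamma }\varphi (t)=I_{0^{+}}^{n-\gamma }\varphi ^{(n)}(t)=\frac{1}{\Gamma (n-\gamma )}\int_{0}^{t}(t-\tau )^{n-\gamma -1}\varphi ^{(n)}(\tau )\,d\tau ,
\]
which is exactly $(g_{n-\gamma }\ast \varphi ^{(n)})(t)$ with the kernel $g_{\mu }(t):=t^{\mu -1}/\Gamma (\mu )$ for $t>0$ and $g_{\mu }(t):=0$ for $t\le 0$; extending $\varphi ^{(n)}$ by zero on the negative axis makes the convolution of Lemma~\ref{t2} coincide with the integral above, so Lemma~\ref{L2} is applicable.

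First I would compute $\mathcal{S}[g_{n-\gamma }(t)]$. Generalizing Property~2 to a real exponent $\mu >-1$ via the substitution $s=tv^{\alpha }$ gives $\mathcal{S}[t^{\mu }]=\Gamma (\mu +1)\,v^{-\alpha (\mu +1)-\beta }$ (this is consistent with Property~2 when $\mu =n\in\mathbb{N}$), so with $\mu =n-\gamma -1$ one gets $\mathcal{S}[g_{n-\gamma }(t)]=v^{-\alpha (n-\gamma )-\beta }$. Next I would invoke Theorem~\ref{L1}, which under the stated continuity hypotheses yields $\mathcal{S}[\varphi ^{(n)}(t)]=v^{n\alpha }\Phi (v,\alpha ,\beta )-\sum_{k=0}^{n-1}v^{k\alpha -\beta }\varphi ^{(n-1-k)}(0^{+})$.

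Then I would apply Lemma~\ref{L2} and do the exponent bookkeeping:
\[
\mathcal{S}[^{c}D_{0^{+}}^{\gamma }\varphi (t)]=v^{\beta }\,\mathcal{S}[g_{n-\gamma }(t)]\cdot \mathcal{S}[\varphi ^{(n)}(t)]=v^{-\alpha (n-\gamma )}\Big(v^{n\alpha }\Phi (v,\alpha ,\beta )-\sum_{k=0}^{n-1}v^{k\alpha -\beta }\varphi ^{(n-1-k)}(0^{+})\Big),
\]
and since $-\alpha (n-\gamma )+n\alpha =\gamma \alpha$ in the leading term and $-\alpha (n-\gamma )+k\alpha -\beta =(\gamma -n+k)\alpha -\beta$ in the sum, this is precisely the asserted identity.

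The main obstacle is analytic rather than algebraic: one must make sure all three transforms involved actually converge. This requires checking that $\varphi ^{(n)}$ (which exists almost everywhere since $\varphi ^{(n-1)}$ is absolutely continuous) is of exponential order, so that Theorem~\ref{L1} applies; that $g_{n-\gamma }$ is locally integrable and $\mathcal{R}e(v^{\alpha })>0$, so that $\mathcal{S}[g_{n-\gamma }]$ is finite; and that the piecewise continuity of $^{c}D_{0^{+}}^{\gamma }\varphi$ secures the left-hand side. These are exactly the hypotheses in the statement, so the argument closes; the $0^{+}$ in the initial data simply records that only the one-sided limits of $\varphi ^{(n-1-k)}$ at the origin are needed.
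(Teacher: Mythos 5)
Your proposal is correct and follows essentially the same route as the paper: both reduce $^{c}D_{0^{+}}^{\gamma }\varphi$ to the convolution of the kernel $t^{n-\gamma -1}/\Gamma (n-\gamma )$ with $\varphi ^{(n)}$, factor the Sadik transform of that convolution, and then insert Theorem \ref{L1} together with the (real-exponent) power-function transform, finishing with the same exponent bookkeeping. The only cosmetic difference is that you invoke Lemma \ref{L2} directly, while the paper re-derives the factorization by interchanging the order of integration and substituting $u=t-\tau$, which is just the convolution theorem proved in place.
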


\begin{proof}
In light of Definitions \ref{d1}, \ref{d2}, then using Theorem \ref{L1}, and
property 2, we find that
\begin{eqnarray*}
\mathcal{S}[^{c}D_{0^{+}}^{\gamma }\varphi (t)] &=&\frac{1}{v^{\beta }}%
\int_{0}^{\infty }e^{-tv^{\alpha }}\left[ ^{c}D_{0^{+}}^{\gamma }\varphi (t)%
\right] dt \\
&=&\frac{1}{v^{\beta }}\int_{0}^{\infty }e^{-tv^{\alpha }}\left[ \frac{1}{%
\Gamma (n-\gamma )}\int_{0}^{t}(t-\tau )^{n-\gamma -1}\varphi ^{(n)}(\tau
)d\tau \right] dt \\
&=&\frac{1}{\Gamma (n-\gamma )}\frac{1}{v^{\beta }}\int_{0}^{\infty
}\int_{\tau }^{\infty }e^{-tv^{\alpha }}(t-\tau )^{n-\gamma -1}\varphi
^{(n)}(\tau )dtd\tau \\
&=&\frac{1}{\Gamma (n-\gamma )}\frac{1}{v^{\beta }}\int_{0}^{\infty }\varphi
^{(n)}(\tau )\int_{0}^{\infty }e^{-v^{\alpha }(u+\tau )}u^{n-\gamma
-1}dud\tau \\
&=&\frac{1}{\Gamma (n-\gamma )}\frac{1}{v^{\beta }}\int_{0}^{\infty
}e^{-\tau v^{\alpha }}\varphi ^{(n)}(\tau )d\tau \int_{0}^{\infty
}e^{-uv^{\alpha }}u^{n-\gamma -1}du \\
&=&\frac{1}{\Gamma (n-\gamma )}\int_{0}^{\infty }e^{-\tau v^{\alpha
}}\varphi ^{(n)}(\tau )d\tau \frac{1}{v^{\beta }}\int_{0}^{\infty
}e^{-uv^{\alpha }}u^{n-\gamma -1}du \\
&=&\frac{v^{\beta }}{\Gamma (n-\gamma )}\mathcal{S}\left[ \varphi ^{(n)}(t)%
\right] \mathcal{S}\left[ t^{n-\gamma -1}\right] \\
&=&\frac{v^{\beta }}{\Gamma (n-\gamma )}\mathcal{S}\left[ \varphi ^{(n)}(t)%
\right] \frac{\Gamma (n-\gamma )}{v^{(n-\gamma -1)\alpha +(\alpha +\beta )}}
\\
&=&\frac{v^{\beta }}{v^{(n-\gamma )\alpha +\beta }}\left[ v^{n\alpha }\Phi
(v,\alpha ,\beta )-\sum_{k=0}^{n-1}v^{k\alpha -\beta }\varphi ^{(n-1-k)}(0)%
\right] \\
&=&v^{\alpha \gamma }\Phi (v,\alpha ,\beta )-\sum_{k=0}^{n-1}v^{(\gamma
+k-n)\alpha -\beta }\varphi ^{(n-1-k)}(0).
\end{eqnarray*}
\end{proof}

\begin{theorem}
\label{th1} Assume that a linear Caputo fractional differential equation
\begin{equation}
^{c}D_{0^{+}}^{\gamma }u(t)=\varphi (t),\text{ \ }0<\gamma <1,  \label{eq1}
\end{equation}%
with intial condition\
\begin{equation}
u(0)=u_{0},\text{\qquad \qquad \qquad\ \ \ \ \ }  \label{eq2}
\end{equation}%
has a unique continuous solution
\begin{equation}
u(t)=u_{0}+\frac{1}{\Gamma (\gamma )}\int_{0}^{t}(t-\tau )^{\gamma
-1}\varphi (\tau )d\tau ,  \label{eq3}
\end{equation}%
if $\varphi (t)$ is continuous on $[0,\infty )$ and exponentially bounded,
then $u(t)$ and $^{c}D_{0^{+}}^{\gamma }u(t)$ are both exponentially
bounded, thus their Sadik transform exists.
\end{theorem}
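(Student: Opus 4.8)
The plan is to prove the theorem in two stages: first verify that the given formula \eqref{eq3} is indeed the unique continuous solution of the initial value problem \eqref{eq1}--\eqref{eq2}, and then establish the exponential bounds on $u(t)$ and on ${}^{c}D_{0^{+}}^{\gamma}u(t)$ so that Definition \ref{d2} applies and the Sadik transforms exist.

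For the first stage, I would apply the Riemann--Liouville fractional integral $I_{0^{+}}^{\gamma}$ to both sides of \eqref{eq1}. Using the standard composition rule $I_{0^{+}}^{\gamma}\,{}^{c}D_{0^{+}}^{\gamma}u(t)=u(t)-u(0)$ (valid for $0<\gamma<1$ when $u$ is absolutely continuous), together with the initial condition \eqref{eq2}, one obtains $u(t)=u_{0}+I_{0^{+}}^{\gamma}\varphi(t)$, which is exactly \eqref{eq3}. Continuity of $u$ on $[0,\infty)$ follows from continuity of $\varphi$ and the fact that $I_{0^{+}}^{\gamma}$ maps continuous functions to continuous functions (the kernel $(t-\tau)^{\gamma-1}$ is integrable since $\gamma>0$). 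Uniqueness follows because any two continuous solutions have a difference that is a continuous solution of the homogeneous problem with zero initial data, and applying $I_{0^{+}}^{\gamma}$ forces it to vanish; alternatively one can invoke a Gronwall-type argument on the equivalent integral equation.

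For the second stage, suppose $|\varphi(t)|\le K e^{at}$ for $t\ge M$ (and $\varphi$ bounded on $[0,M]$ by continuity, say by $K'$). Then from \eqref{eq3},
\begin{equation*}
|u(t)|\le |u_{0}|+\frac{1}{\Gamma(\gamma)}\int_{0}^{t}(t-\tau)^{\gamma-1}|\varphi(\tau)|\,d\tau,
\end{equation*}
and I would bound the integral by splitting at $M$ (or, more cleanly, by using $|\varphi(\tau)|\le C e^{a\tau}$ for a suitable $C=\max\{K,K'e^{-aM}\}$ valid for all $\tau\ge 0$ when $a\ge 0$; if $a<0$ replace $a$ by $0$). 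Substituting $\tau=t-u$ gives
\begin{equation*}
\frac{1}{\Gamma(\gamma)}\int_{0}^{t}u^{\gamma-1}e^{a(t-u)}\,du = \frac{e^{at}}{\Gamma(\gamma)}\int_{0}^{t}u^{\gamma-1}e^{-au}\,du\le \frac{e^{at}}{\Gamma(\gamma)}\int_{0}^{\infty}u^{\gamma-1}e^{-au}\,du=\frac{e^{at}}{a^{\gamma}}
\end{equation*}
for $a>0$, so $|u(t)|\le |u_0|+Ce^{at}/a^{\gamma}$, which is exponentially bounded; for $a\le 0$ the integral $\int_0^t u^{\gamma-1}\,du=t^\gamma/\gamma$ grows only polynomially, hence is dominated by any positive exponential. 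Thus $u$ satisfies the growth hypothesis of Definition \ref{d2}, and since it is continuous (hence piecewise continuous on every $[0,A]$), $\mathcal{S}[u(t)]$ exists. For ${}^{c}D_{0^{+}}^{\gamma}u(t)$, equation \eqref{eq1} identifies it with $\varphi(t)$, which is exponentially bounded and piecewise continuous by hypothesis, so $\mathcal{S}[{}^{c}D_{0^{+}}^{\gamma}u(t)]=\mathcal{S}[\varphi(t)]$ exists as well.

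**The main obstacle** I anticipate is the estimate of the fractional integral: one must handle the singular kernel $(t-\tau)^{\gamma-1}$ at $\tau=t$ carefully and track the constants uniformly in $t$, and the argument genuinely bifurcates according to the sign of the exponential order $a$ (polynomial growth when $a\le 0$ versus the $\Gamma(\gamma)/a^{\gamma}$ bound when $a>0$). Everything else — the composition formula, continuity, and uniqueness — is standard fractional calculus that can be cited from \cite{KL1}.
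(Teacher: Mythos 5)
Your proposal is correct and follows essentially the same route as the paper: both proofs bound the Volterra representation of $u$ by an exponential via the estimate $\int_{0}^{\infty}s^{\gamma-1}e^{-\sigma s}\,ds=\Gamma(\gamma)/\sigma^{\gamma}$ (the paper splits the integral at a large time $T$ and multiplies by $e^{-\sigma t}$, while you absorb the bound on the initial interval into one uniform exponential constant), and then read off the bound for ${}^{c}D_{0^{+}}^{\gamma}u$ directly from ${}^{c}D_{0^{+}}^{\gamma}u=\varphi$. Only a trivial slip: when $a\ge 0$ the uniform constant should be $C=\max\{K,K'\}$ rather than $\max\{K,K'e^{-aM}\}$ (the worst case on $[0,M]$ is at $\tau=0$); your added verification of the solution formula \eqref{eq3}, its uniqueness, and the $a\le 0$ case are harmless extras that the paper simply assumes or omits.
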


\begin{proof}
Since $\varphi (t)$ is exponentially bounded, there exist two positive
constants $M,\sigma $ and enough large $T$ such that $\left\Vert \varphi
(t)\right\Vert \leq Me^{\sigma t}$ for all $t\geq T$. It is easy to see that
Eq.(\ref{eq1}) is equivalent to the Volterra integral equation%
\begin{equation}
u(t)=u_{0}+\frac{1}{\Gamma (\gamma )}\int_{0}^{t}(t-\tau )^{\gamma
-1}\varphi (\tau )d\tau \text{, \ }0\leq t<\infty .  \label{eq4}
\end{equation}

For $t\geq T,$ Eq.(\ref{eq4}) can be rewritten as%
\begin{equation*}
u(t)=u_{0}+\frac{1}{\Gamma (\gamma )}\int_{0}^{T}(t-\tau )^{\gamma
-1}\varphi (\tau )d\tau +\frac{1}{\Gamma (\gamma )}\int_{T}^{t}(t-\tau
)^{\gamma -1}\varphi (\tau )d\tau .
\end{equation*}

In view of assumptions, $u(t)$ is unique continuous solution on $[0,\infty )$%
, with $u(0)=u_{0},$ then $\varphi (t)$ is bounded on $[0,T]$, i.e. there
exists a constant $k>0$ such that $\Vert \varphi (t)\Vert \leq k$. Now, we
have%
\begin{equation*}
\left\Vert u(t)\right\Vert \leq \left\Vert u_{0}\right\Vert +\frac{k}{\Gamma
(\gamma )}\int_{0}^{T}(t-\tau )^{\gamma -1}d\tau +\frac{1}{\Gamma (\gamma )}%
\int_{T}^{t}(t-\tau )^{\gamma -1}\left\Vert \varphi (\tau )\right\Vert d\tau
.
\end{equation*}

Multiply the last inequality by $e^{-\sigma t}$ then from fact that $%
e^{-\sigma t}\leq e^{-\sigma T}$, $e^{-\sigma t}$ $\leq $ $e^{-\sigma \tau }$%
, and $\Vert \varphi (t)\Vert \leq Me^{\sigma t}$ $(t\geq T)$, we obtain%
\begin{eqnarray*}
\left\Vert u(t)\right\Vert e^{-\sigma t} &\leq &\left\Vert u_{0}\right\Vert
e^{-\sigma t}+\frac{ke^{-\sigma t}}{\Gamma (\gamma )}\int_{0}^{T}(t-\tau
)^{\gamma -1}d\tau +\frac{e^{-\sigma t}}{\Gamma (\gamma )}%
\int_{T}^{t}(t-\tau )^{\gamma -1}\left\Vert \varphi (\tau )\right\Vert d\tau
\\
&\leq &\left\Vert u_{0}\right\Vert e^{-\sigma T}+\frac{ke^{-\sigma T}}{%
\Gamma (\gamma +1)}[(t)^{\gamma }-(t-T)^{\gamma }]+\frac{M}{\Gamma (\gamma )}%
\int_{0}^{t}(t-\tau )^{\gamma -1}e^{\sigma (\tau -t)}d\tau \\
&\leq &\left\Vert u_{0}\right\Vert e^{-\sigma T}+\frac{ke^{-\sigma T}}{%
\Gamma (\gamma +1)}T^{\gamma }+\frac{M}{\Gamma (\gamma )}\int_{0}^{t}s^{%
\gamma -1}e^{-\sigma s}ds \\
&\leq &\left\Vert u_{0}\right\Vert e^{-\sigma T}+\frac{ke^{-\sigma T}}{%
\Gamma (\gamma +1)}T^{\gamma }+\frac{M}{\Gamma (\gamma )}\int_{0}^{\infty
}s^{\gamma -1}e^{-\sigma s}ds \\
&\leq &\left\Vert u_{0}\right\Vert e^{-\sigma T}+\frac{ke^{-\sigma T}}{%
\Gamma (\gamma +1)}T^{\gamma }+\frac{M}{\sigma ^{\gamma }}.
\end{eqnarray*}

Denote
\begin{equation*}
A=\left\Vert u_{0}\right\Vert e^{-\sigma T}+\frac{ke^{-\sigma T}}{\Gamma
(\gamma +1)}T^{\gamma }+\frac{M}{\sigma ^{\gamma }},
\end{equation*}%
we get%
\begin{equation*}
\left\Vert u(t)\right\Vert \leq Ae^{\sigma t},\text{ }t\geq T.
\end{equation*}
\end{proof}

From Eq.(\ref{eq1}) and hypothesis of $\varphi ,$ we conclude that%
\begin{equation*}
\left\Vert ^{c}D_{0^{+}}^{\gamma }u(t)\right\Vert =\left\Vert \varphi
(t)\right\Vert \leq Me^{\sigma t}\text{ }t\geq T.
\end{equation*}

Applying Sadik transform on both sides of Eq.(\ref{eq1}) and using Theorem %
\ref{th2}, we have
\begin{equation*}
v^{\alpha \gamma }U(v,\alpha ,\beta )-v^{(\gamma -1)\alpha -\beta }u(0)=\Phi
(v,\alpha ,\beta ).
\end{equation*}

Since $u(0)=u_{0},$ it follows
\begin{equation*}
U(v,\alpha ,\beta )=u_{0}\frac{1}{v^{\alpha +\beta }}+\frac{\Phi (v,\alpha
,\beta )}{v^{\alpha \gamma }}.
\end{equation*}

Take the inverse of Sadik transform to both sides of the above equation, and
using property 1, and Lemma \ref{L2}, we get%
\begin{eqnarray}
u(t) &=&u_{0}\mathcal{S}^{-1}\left[ \frac{1}{v^{\alpha +\beta }}\right] +%
\mathcal{S}^{-1}\left[ \frac{1}{v^{\alpha \gamma }}\Phi (v,\alpha ,\beta )%
\right]  \notag \\
&=&u_{0}+\mathcal{S}^{-1}\left[ v^{\beta }\frac{1}{v^{\alpha \gamma +\beta }}%
\Phi (v,\alpha ,\beta )\right]  \notag \\
&=&u_{0}+\mathcal{S}^{-1}\left[ v^{\beta }\frac{1}{v^{(\gamma -1)\alpha
+\alpha +\beta }}\Phi (v,\alpha ,\beta )\right]  \notag \\
&=&u_{0}+(\varphi _{1}\ast \varphi )(t).  \label{eq9}
\end{eqnarray}

Put $\Phi _{1}(v,\alpha ,\beta ):=\frac{1}{v^{(\gamma -1)\alpha +\alpha
+\beta }},$ such that $\mathcal{S}^{-1}\left[ \Phi _{1}(v,\alpha ,\beta )%
\right] =\varphi _{1}(t)$ and $\mathcal{S}^{-1}[\Phi (v,\alpha ,\beta
)]=\varphi (t).$ Applying the inverse Sadik transform of $\Phi _{1}(v,\alpha
,\beta )$, with using property 2, we find that
\begin{equation*}
\mathcal{S}^{-1}\left[ \Phi _{1}(v,\alpha ,\beta )\right] =\mathcal{S}^{-1}%
\left[ \frac{1}{v^{(\gamma -1)\alpha +\alpha +\beta }}\right] =\frac{%
t^{\gamma -1}}{\Gamma (\gamma )}=\varphi _{1}(t).
\end{equation*}

Therefore Eq.(\ref{eq9}) becomes as follows
\begin{eqnarray*}
u(t) &=&u_{0}+(\varphi _{1}\ast \varphi )(t) \\
&=&u_{0}+\frac{1}{\Gamma (\gamma )}\int_{0}^{t}(t-\tau )^{\gamma -1}\varphi
(\tau )d\tau .
\end{eqnarray*}

\begin{theorem}
\label{th3}(Time Delay) Let $\Phi (v,\alpha ,\beta )=\mathcal{S[}\varphi
(t)] $. Then Sadik transform of time delay is given by
\begin{equation*}
\mathcal{S}\left[ \varphi (t-a)\cdot \eta (t-a)\right] =e^{-av^{\alpha
}}\Phi (v,\alpha ,\beta ),
\end{equation*}%
where
\begin{equation}
\eta (t-a)=\left\{
\begin{array}{c}
0,\qquad t<a \\
1,\qquad t\geq a%
\end{array}%
\right. .  \label{eq5}
\end{equation}
\end{theorem}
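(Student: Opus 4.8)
The plan is to unwind the definition of the Sadik transform applied to the shifted, truncated function and then reduce the integral to that of $\varphi$ itself by a change of variable. First I would write, directly from Definition \ref{d2},
\begin{equation*}
\mathcal{S}\left[ \varphi (t-a)\cdot \eta (t-a)\right] =\frac{1}{v^{\beta }}\int_{0}^{\infty }e^{-tv^{\alpha }}\varphi (t-a)\,\eta (t-a)\,dt .
\end{equation*}
Using the definition \eqref{eq5} of $\eta$, the integrand vanishes for $t<a$ and equals $e^{-tv^{\alpha }}\varphi (t-a)$ for $t\geq a$, so the lower limit can be replaced by $a$:
\begin{equation*}
\mathcal{S}\left[ \varphi (t-a)\cdot \eta (t-a)\right] =\frac{1}{v^{\beta }}\int_{a}^{\infty }e^{-tv^{\alpha }}\varphi (t-a)\,dt .
\end{equation*}

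Next I would substitute $u=t-a$ (so $t=u+a$, $dt=du$, and the lower limit $t=a$ becomes $u=0$), which gives
\begin{equation*}
\frac{1}{v^{\beta }}\int_{0}^{\infty }e^{-(u+a)v^{\alpha }}\varphi (u)\,du = e^{-av^{\alpha }}\cdot \frac{1}{v^{\beta }}\int_{0}^{\infty }e^{-uv^{\alpha }}\varphi (u)\,du = e^{-av^{\alpha }}\Phi (v,\alpha ,\beta ),
\end{equation*}
where in the last step I pull the constant $e^{-av^{\alpha }}$ out of the integral and recognize the remaining expression as $\mathcal{S}[\varphi(t)]=\Phi(v,\alpha,\beta)$. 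This completes the argument.

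There is no real obstacle here; the only point requiring a word of care is convergence, which is handled exactly as in Definition \ref{d2}: assuming $a\geq 0$ and $\mathcal{R}e(v^{\alpha })>0$ together with the exponential-order hypothesis on $\varphi$, the integral $\int_{0}^{\infty }e^{-uv^{\alpha }}\varphi(u)\,du$ converges absolutely, so the change of variable and the factoring-out of $e^{-av^{\alpha }}$ are legitimate. I would state this convergence condition explicitly (as the other theorems in this section do) rather than belabor it.
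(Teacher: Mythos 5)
Your proposal is correct and follows essentially the same route as the paper: write out the definition, use the step function $\eta(t-a)$ to discard the portion of the integral over $[0,a]$, substitute $u=t-a$, and factor out $e^{-av^{\alpha}}$ to recover $\Phi(v,\alpha,\beta)$. The only difference is cosmetic (the paper explicitly splits the integral into $\int_{0}^{a}+\int_{a}^{\infty}$ before dropping the first piece, and does not dwell on convergence), so no changes are needed.
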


\begin{proof}
We prove by going back to the original definition of the Sadik transform%
\begin{equation*}
\mathcal{S}\left[ \varphi (t-a)\cdot \eta (t-a)\right] =\frac{1}{v^{\beta }}%
\int_{0}^{\infty }e^{-tv^{\alpha }}\left[ \varphi (t-a)\cdot \eta (t-a)%
\right] dt,
\end{equation*}

Changing the lower limit of the integral from $0$ to $a$ and drop the step
function gives%
\begin{eqnarray*}
\mathcal{S}\left[ \varphi (t-a)\cdot \eta (t-a)\right] &=&\frac{1}{v^{\beta }%
}\int_{0}^{a}e^{-tv^{\alpha }}\left[ \varphi (t-a)\cdot \eta (t-a)\right] dt
\\
&&+\frac{1}{v^{\beta }}\int_{a}^{\infty }e^{-tv^{\alpha }}\left[ \varphi
(t-a)\cdot \eta (t-a)\right] dt \\
&=&\frac{1}{v^{\beta }}\int_{a}^{\infty }e^{-tv^{\alpha }}\varphi (t-a)dt.
\end{eqnarray*}

By change of variable $u=t-a,$ it follows%
\begin{eqnarray*}
\mathcal{S}\left[ \varphi (t-a)\cdot \eta (t-a)\right] &=&\frac{1}{v^{\beta }%
}\int_{0}^{\infty }e^{-(u+a)v^{\alpha }}\varphi (u)du \\
&=&e^{-av^{\alpha }}\Phi (v,\alpha ,\beta ).
\end{eqnarray*}
\end{proof}

\begin{example}
\label{u} The Sadik transform of $\eta (t-a)$ is $\frac{e^{-av^{\alpha }}}{%
v^{\alpha +\beta }}$. Indeed, from the definition of Sadik transform and the
relation Eq.(\ref{eq5}), we have%
\begin{equation*}
\mathcal{S[}\eta (t-a)]=\frac{1}{v^{\beta }}\int_{0}^{a}e^{-tv^{\alpha
}}\eta (t-a)dt+\frac{1}{v^{\beta }}\int_{a}^{\infty }e^{-tv^{\alpha }}\eta
(t-a)dt=\frac{e^{-av^{\alpha }}}{v^{\alpha +\beta }}.
\end{equation*}%
Setting $\varphi (t-a)=1.$ Then
\begin{equation*}
\mathcal{S}\left[ \varphi (t-a)\cdot \eta (t-a)\right] =\mathcal{S}\left[
\eta (t-a)\right] =\frac{e^{-av^{\alpha }}}{v^{\alpha +\beta }}%
=e^{-av^{\alpha }}\Phi (v,\alpha ,\beta ).
\end{equation*}%
This satisfies Theorem \ref{th3}.
\end{example}

\begin{example}
Let $\varphi (t-a)=t-a.$ Then, from Lemma \ref{h}, we have
\begin{equation*}
\mathcal{S}[t\varphi (t)]=-\left( \frac{1}{\alpha v^{\alpha -1}}\frac{d}{dv}+%
\frac{\beta }{\alpha v^{\alpha }}\right) \Phi (v,\alpha ,\beta ).
\end{equation*}%
Hence, with using Example \ref{u}, we get
\begin{eqnarray*}
&&\mathcal{S}\left[ \varphi (t-a).\eta (t-a)\right] \\
&=&-\left( \frac{1}{\alpha v^{\alpha -1}}\frac{d}{dv}+\frac{\beta }{\alpha
v^{\alpha }}\right) \frac{e^{-av^{\alpha }}}{v^{\alpha +\beta }}-a\frac{%
e^{-av^{\alpha }}}{v^{\alpha +\beta }} \\
&=&-\frac{1}{\alpha v^{\alpha -1}}\left[ \frac{(-a\alpha v^{\alpha
-1}-\left( \alpha +\beta \right) v^{-1})e^{-av^{\alpha }}}{v^{\alpha +\beta }%
}\right] +\frac{\beta }{\alpha }\frac{e^{-av^{\alpha }}}{v^{2\alpha +\beta }}%
-a\frac{e^{-av^{\alpha }}}{v^{\alpha +\beta }} \\
&=&\frac{1}{v^{2\alpha +\beta }}e^{-av^{\alpha }} \\
&=&e^{-av^{\alpha }}\Phi (v,\alpha ,\beta ).
\end{eqnarray*}
\end{example}

\begin{theorem}
\label{th4} (Initial Value Theorem) Let $\Phi (v,\alpha ,\beta )=\mathcal{S[}%
\varphi (t)]$. Then the sadik transform of initial value given by%
\begin{equation*}
\lim_{v^{\alpha }\rightarrow \infty }\left[ v^{\alpha }\Phi (v,\alpha ,\beta
)\right] =v^{-\beta }\varphi (0^{+}).
\end{equation*}
\end{theorem}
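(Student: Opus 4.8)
The plan is to read off the initial value theorem directly from the first-order derivative rule established in Theorem \ref{L1}. Setting $n=1$ there gives $\mathcal{S}[\varphi^{\prime}(t)]=v^{\alpha}\Phi(v,\alpha,\beta)-v^{-\beta}\varphi(0)$, and rearranging isolates the quantity of interest:
\begin{equation*}
v^{\alpha}\Phi(v,\alpha,\beta)=v^{-\beta}\varphi(0)+\mathcal{S}[\varphi^{\prime}(t)]=v^{-\beta}\varphi(0)+\frac{1}{v^{\beta}}\int_{0}^{\infty}e^{-tv^{\alpha}}\varphi^{\prime}(t)\,dt.
\end{equation*}
Thus it suffices to show that the remainder term $\mathcal{S}[\varphi^{\prime}(t)]$ tends to $0$ as $v^{\alpha}\to\infty$; once this is done, the identity $\varphi(0^{+})=\varphi(0)$ (legitimate because $\varphi$ is continuous on $[0,\infty)$, as assumed in Theorem \ref{L1}) produces exactly $\lim_{v^{\alpha}\to\infty}v^{\alpha}\Phi(v,\alpha,\beta)=v^{-\beta}\varphi(0^{+})$.

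The substantive step, then, is the limit $\int_{0}^{\infty}e^{-tv^{\alpha}}\varphi^{\prime}(t)\,dt\to0$ as $\mathrm{Re}(v^{\alpha})\to\infty$. First I would note that for each fixed $t>0$ the integrand tends pointwise to $0$, since $|e^{-tv^{\alpha}}|=e^{-t\,\mathrm{Re}(v^{\alpha})}\to0$. Next, because $\varphi^{\prime}$ is of exponential order there are constants $K,c>0$ with $|\varphi^{\prime}(t)|\le Ke^{ct}$, so for all $v$ with $\mathrm{Re}(v^{\alpha})\ge c+1$ the modulus of the integrand is bounded by the single integrable function $Ke^{-t}$ on $[0,\infty)$. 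The dominated convergence theorem then lets me pass the limit inside the integral, giving the value $0$. The prefactor $\frac{1}{v^{\beta}}$ does not spoil the conclusion under the conventions in force on $\alpha,\beta$, so $\mathcal{S}[\varphi^{\prime}(t)]\to0$, and combining with the displayed identity finishes the proof.

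I expect the only genuinely delicate point to be the interchange of limit and integration: it rests on the exponential-order hypothesis to supply a dominating function, and it is essential here that this bound be available for $\varphi^{\prime}$ (not merely for $\varphi$) — which is precisely why the derivative hypotheses of Theorem \ref{L1} plus exponential order are invoked. A fallback that avoids differentiating $\varphi$ is to work straight from the definition: split $\int_{0}^{\infty}e^{-tv^{\alpha}}\varphi(t)\,dt$ at a small $\delta>0$, control the tail $\int_{\delta}^{\infty}$ with the exponential bound on $\varphi$ and the near-origin piece $\int_{0}^{\delta}$ using continuity of $\varphi$ at $0$ together with the elementary evaluation $\mathcal{S}[1]=v^{-(\alpha+\beta)}$, then send $v^{\alpha}\to\infty$ and afterwards $\delta\to0$; but the argument through Theorem \ref{L1} is shorter and is the one I would present.
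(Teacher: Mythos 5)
Your proposal is correct and takes essentially the same route as the paper: both start from the first-order derivative rule $\mathcal{S}[\varphi^{\prime}(t)]=v^{\alpha}\Phi(v,\alpha,\beta)-v^{-\beta}\varphi(0)$ and conclude by showing that $\mathcal{S}[\varphi^{\prime}(t)]$ vanishes as $v^{\alpha}\rightarrow\infty$. The only differences are minor: the paper splits the integral at $0^{-}$ and $0^{+}$ (so the value $\varphi(0^{+})$ emerges from $\int_{0^{-}}^{0^{+}}\varphi^{\prime}(t)\,dt$ even if there is a jump at the origin) and passes the limit under the integral only informally, whereas you identify $\varphi(0)=\varphi(0^{+})$ by continuity and justify the interchange rigorously via dominated convergence using the exponential-order bound on $\varphi^{\prime}$.
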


\begin{proof}
We first start with the derivative rule:%
\begin{equation*}
\mathcal{S}\left[ \frac{d\varphi (t)}{dt}\right] =v^{\alpha }\Phi (v,\alpha
,\beta )-v^{-\beta }\varphi (0^{-}).
\end{equation*}

From the definition of Sadik transform, with splitting the integral into two
parts:%
\begin{eqnarray*}
v^{\alpha }\Phi (v,\alpha ,\beta )-v^{-\beta }\varphi (0^{-}) &=&\frac{1}{%
v^{\beta }}\int_{0^{-}}^{\infty }\frac{d\varphi (t)}{dt}e^{-tv^{\alpha }}dt=%
\frac{1}{v^{\beta }}\int_{0^{-}}^{\infty }\varphi ^{\prime
}(t)e^{-tv^{\alpha }}dt \\
&=&\frac{1}{v^{\beta }}\int_{0^{-}}^{0^{+}}\varphi ^{\prime
}(t)e^{-tv^{\alpha }}dt+\frac{1}{v^{\beta }}\int_{0^{+}}^{\infty }\varphi
^{\prime }(t)e^{-tv^{\alpha }}dt.
\end{eqnarray*}

Take the limit as $v^{\alpha }\rightarrow \infty ,$%
\begin{eqnarray*}
&&\lim_{v^{\alpha }\rightarrow \infty }\left[ v^{\alpha }\Phi (v,\alpha
,\beta )-v^{-\beta }\varphi (0^{-})\right]  \\
&=&\lim_{v^{\alpha }\rightarrow \infty }\left[ \frac{1}{v^{\beta }}%
\int_{0^{-}}^{0^{+}}\varphi ^{\prime }(t)e^{-tv^{\alpha }}dt+\frac{1}{%
v^{\beta }}\int_{0^{+}}^{\infty }\varphi ^{\prime }(t)e^{-tv^{\alpha }}dt%
\right] .
\end{eqnarray*}

Several facilitations are as follows:

In the expression
\begin{equation*}
\lim_{v^{\alpha }\rightarrow \infty }\left[ v^{\alpha }\Phi (v,\alpha ,\beta
)-v^{-\beta }\varphi (0^{-})\right] ,
\end{equation*}%
we can take the second term out of the limit, since it doesn't depend on $%
v^{\alpha }.$

In the expression
\begin{equation*}
\lim_{v^{\alpha }\rightarrow \infty }\left[ \frac{1}{v^{\beta }}%
\int_{0^{-}}^{0^{+}}\varphi ^{\prime }(t)e^{-tv^{\alpha }}dt+\frac{1}{%
v^{\beta }}\int_{0^{+}}^{\infty }\varphi ^{\prime }(t)e^{-tv^{\alpha }}dt%
\right] ,
\end{equation*}%
we can take the first term out of the limit for the same reason, and when $%
v^{\alpha }\rightarrow \infty $ the exponential term in the second term goes
to zero. Hence%
\begin{eqnarray*}
\left( \lim_{v^{\alpha }\rightarrow \infty }[v^{\alpha }\Phi (v,\alpha
,\beta )]\right) -v^{-\beta }\varphi (0^{-}) &=&\frac{1}{v^{\beta }}%
\int_{0^{-}}^{0^{+}}\varphi ^{\prime }(t)dt+\frac{1}{v^{\beta }}%
\int_{0^{+}}^{\infty }\varphi ^{\prime }(t)(0)dt \\
&=&\frac{1}{v^{\beta }}\int_{0^{-}}^{0^{+}}\varphi ^{\prime }(t)dt \\
&=&v^{-\beta }\varphi (0^{+})-v^{-\beta }\varphi (0^{-}).
\end{eqnarray*}

This gives%
\begin{equation*}
\lim_{v^{\alpha }\rightarrow \infty }\left[ v^{\alpha }\Phi (v,\alpha ,\beta
)\right] =v^{-\beta }\varphi (0^{+}).
\end{equation*}
\end{proof}

\begin{remark}
Theorem \ref{th4} is true only if $\varphi (t)$ is a strictly proper
fraction in which the numerator order is lower than the denominator order.
\end{remark}

\begin{theorem}
\label{th5} (Final Value Theorem) Let $\Phi (v,\alpha ,\beta )=\mathcal{S[}%
\varphi (t)]$. Then the sadik transform of final value given by
\begin{equation*}
\lim_{v^{\alpha }\rightarrow 0}\left[ v^{\alpha }\Phi (v,\alpha ,\beta )%
\right] =\lim_{t\rightarrow \infty }\left[ v^{-\beta }\varphi (t)\right] .
\end{equation*}

\begin{proof}
We start as we did for the initial value theorem, with the Sadik transform
of the derivative%
\begin{equation*}
\mathcal{S}\left[ \frac{d\varphi (t)}{dt}\right] =\frac{1}{v^{\beta }}%
\int_{0^{-}}^{\infty }\frac{d\varphi (t)}{dt}e^{-tv^{\alpha }}dt=v^{\alpha
}\Phi (v,\alpha ,\beta )-v^{-\beta }\varphi (0^{-}).
\end{equation*}

Take the limit on both sides as $v^{\alpha }\rightarrow 0,$ we have%
\begin{equation*}
\lim_{v^{\alpha }\rightarrow 0}\left[ \frac{1}{v^{\beta }}%
\int_{0^{-}}^{\infty }\frac{d\varphi (t)}{dt}e^{-tv^{\alpha }}dt\right]
=\lim_{v^{\alpha }\rightarrow 0}\left[ v^{\alpha }\Phi (v,\alpha ,\beta
)-v^{-\beta }\varphi (0^{-})\right] .
\end{equation*}

As $v^{\alpha }\rightarrow 0,$ $e^{-tv^{\alpha }}$vanishes from the
integral. Also, the term $v^{-\beta }\varphi (0^{-})$ in the right side we
can take it out of the limit since it independent of $v^{\alpha }.$ Hence,
by the theory of fundamental calculus, we have%
\begin{eqnarray*}
\lim_{v^{\alpha }\rightarrow 0}\left( \frac{1}{v^{\beta }}\varphi (\infty )-%
\frac{1}{v^{\beta }}\varphi (0^{-})\right) &=&\lim_{v^{\alpha }\rightarrow 0}%
\left[ \frac{1}{v^{\beta }}\int_{0^{-}}^{\infty }\frac{d\varphi (t)}{dt}dt%
\right] \\
&=&\left( \lim_{v^{\alpha }\rightarrow 0}\left[ v^{\alpha }\Phi (v,\alpha
,\beta )\right] \right) -v^{-\beta }\varphi (0^{-}).
\end{eqnarray*}

Since the term on the left doesn't depend on $v^{\alpha }$, thus%
\begin{equation*}
\frac{1}{v^{\beta }}\varphi (\infty )-\frac{1}{v^{\beta }}\varphi
(0^{-})=\left( \lim_{v^{\alpha }\rightarrow 0}\left[ v^{\alpha }\Phi
(v,\alpha ,\beta )\right] \right) -v^{-\beta }\varphi (0^{-}).
\end{equation*}

That is
\begin{equation*}
\lim_{t\rightarrow \infty }\left[ v^{-\beta }\varphi (t)\right]
=\lim_{v^{\alpha }\rightarrow 0}\left[ v^{\alpha }\Phi (v,\alpha ,\beta )%
\right] .
\end{equation*}
\end{proof}
\end{theorem}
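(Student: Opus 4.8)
The plan is to mirror the classical Laplace-transform proof of the final value theorem, using the first-order derivative rule (Theorem~\ref{L1} with $n=1$) as the bridge between the transform side and the time-domain side. First I would record the identity
\[
\mathcal{S}\left[\varphi'(t)\right]=\frac{1}{v^{\beta}}\int_{0^-}^{\infty}\varphi'(t)e^{-tv^{\alpha}}\,dt=v^{\alpha}\Phi(v,\alpha,\beta)-v^{-\beta}\varphi(0^-),
\]
which is legitimate because $\varphi$ is continuous and exponentially bounded, so $\Phi$ exists for $\operatorname{Re}(v^{\alpha})$ large and the integral converges. This is an equality of functions of $v$, so I am free to apply $\lim_{v^{\alpha}\to 0}$ to both ends.

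Next I would pass to that limit term by term. On the right, $v^{-\beta}\varphi(0^-)$ does not depend on $v^{\alpha}$ and comes out of the limit, leaving $(\lim_{v^{\alpha}\to 0}[v^{\alpha}\Phi(v,\alpha,\beta)])-v^{-\beta}\varphi(0^-)$. On the left, for each fixed $t$ the kernel satisfies $e^{-tv^{\alpha}}\to 1$ as $v^{\alpha}\to 0$; granting that the limit may be moved inside the integral, the integral converges to $\int_{0^-}^{\infty}\varphi'(t)\,dt=\varphi(\infty)-\varphi(0^-)$ by the fundamental theorem of calculus, where $\varphi(\infty):=\lim_{t\to\infty}\varphi(t)$. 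Since $v^{-\beta}$ is independent of $v^{\alpha}$, the left side tends to $v^{-\beta}\varphi(\infty)-v^{-\beta}\varphi(0^-)$.

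Equating the two limits and cancelling the common summand $v^{-\beta}\varphi(0^-)$ gives $v^{-\beta}\varphi(\infty)=\lim_{v^{\alpha}\to 0}[v^{\alpha}\Phi(v,\alpha,\beta)]$, which is the claimed equality once $\varphi(\infty)$ is rewritten as $\lim_{t\to\infty}\varphi(t)$ and absorbed back under the factor $v^{-\beta}$.

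The genuine obstacle is the interchange $\lim_{v^{\alpha}\to 0}\int_{0^-}^{\infty}\varphi'(t)e^{-tv^{\alpha}}\,dt=\int_{0^-}^{\infty}\varphi'(t)\,dt$: the convergence $e^{-tv^{\alpha}}\to 1$ is not uniform on $[0,\infty)$, so this step needs a hypothesis — it suffices that $\varphi'\in L^{1}[0,\infty)$, equivalently that $\lim_{t\to\infty}\varphi(t)$ exists and is finite, after which dominated convergence applies. I would therefore state explicitly that the theorem requires $\lim_{t\to\infty}\varphi(t)$ to exist (so that $v^{\alpha}\Phi$ has at worst a simple pole at $v^{\alpha}=0$), rather than rely, as the surrounding text does, on ``the theory of fundamental calculus''; without such a condition $v^{\alpha}\Phi$ may have a limit at $0$ corresponding to no actual final value, e.g. when $\varphi$ oscillates.
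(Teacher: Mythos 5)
Your proposal follows essentially the same route as the paper: apply the first-order derivative rule, take $v^{\alpha}\to 0$ on both sides, push the limit inside the integral so the kernel becomes $1$, invoke the fundamental theorem of calculus, and cancel the $v^{-\beta}\varphi(0^{-})$ terms. Your added insistence that $\lim_{t\to\infty}\varphi(t)$ exist (so dominated convergence justifies the interchange) is a welcome sharpening of what the paper only acknowledges informally in the remark following Theorem~\ref{th5}, but it does not change the underlying argument.
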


\begin{remark}
Theorem \ref{th5} is satisfied for all functions except the increasing
functions and oscillating functions such as sine and cosine that don't have
a final value.
\end{remark}

\begin{example}
\label{rw} Let $0<\gamma <1$ and $b\in
%TCIMACRO{\U{211d} }%
%BeginExpansion
\mathbb{R}
%EndExpansion
.$ Then the problem
\begin{equation}
^{c}D_{0}^{\gamma }y(t)-by(t)=0  \label{eq8}
\end{equation}%
with the initial condition $y(0)=y_{0}$ has a solution given by
\begin{equation*}
y(t)=y_{0}\sum_{k=0}^{\infty }\frac{(bt^{\gamma })^{k}}{\Gamma (\gamma k+1)}%
=y_{0}E_{\gamma ,1}(bt^{\gamma }).
\end{equation*}%
Applying the Sadik transform on both sides of Eq.(\ref{eq8}), together with
the Theorem \ref{th2}, we can conclude that%
\begin{equation*}
Y(v,\alpha ,\beta )=\frac{v^{\alpha \gamma -\beta -1}}{(v^{\alpha \gamma }-b)%
}y_{0},
\end{equation*}%
by using the lemma \ref{L4} we get
\begin{equation*}
y(t)=y_{0}\sum_{k=0}^{\infty }\frac{(bt^{\gamma })^{k}}{\Gamma (\gamma k+1)}%
=y_{0}E_{\gamma ,1}(bt^{\gamma })
\end{equation*}
\end{example}

\begin{table}[tbp]
\caption{ Plot of y(t) versus t (h=0.2) in Example \protect\ref{rw}}
\begin{center}
\begin{tabular}{|l|l||l|l||l|l||l|}
\hline
t & 0 & 0.2000 & 0.4000 & 0.6000 & 0.8000 & 1.0000 \\ \hline
y(t) & -1.1284 & -0.2012 & -0.0649 & 0.0479 & 0.1743 & 0 \\ \hline
\end{tabular}%
\end{center}
\end{table}

\begin{example}
Let $\varphi (t)=e^{t}.$ Then $\Phi (v,\alpha ,\beta )=\frac{v^{-\beta }}{%
v^{\alpha }-1}$ and the initial value of this function given as follows%
\begin{equation*}
\lim_{v^{\alpha }\rightarrow \infty }\left[ v^{\alpha }\varphi (v,\alpha
,\beta ,)\right] =\lim_{v^{\alpha }\rightarrow \infty }\left[ v^{\alpha }%
\frac{v^{-\beta }}{v^{\alpha }-1}\right] =v^{-\beta }=v^{-\beta }\varphi
(0^{+}),
\end{equation*}

where $\varphi (0^{+})=1.$ Therefore, Theorem (\ref{th4}) is satisfied.
\end{example}

\begin{figure}[tbp]
\includegraphics[width=\linewidth]{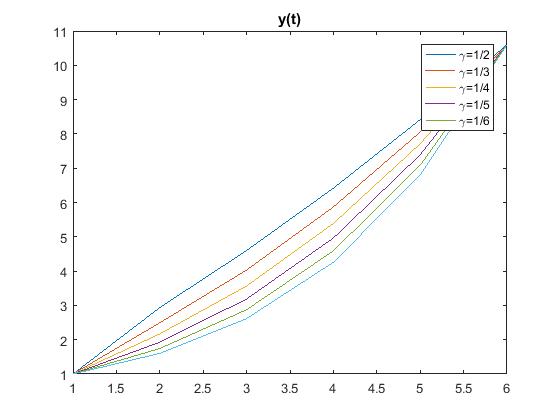}
\caption{Plot of $y(t)=y_{0}E_{\protect\gamma ,1}(bt^{\protect\gamma })$ for
the Caputo fractional problem and the initial condition $y(0)=1$, $b=3$ with
diferent values of $\protect\gamma $ in Example \protect\ref{rw}.}
\label{fig:boat1}
\end{figure}

\begin{example}
Let $\varphi (t)=\sin (at).$ Then $\Phi (v,\alpha ,\beta ,)=\frac{av^{-\beta
}}{v^{2\alpha }+a^{2}},$ and the initial value of this function given as
follows%
\begin{equation*}
\lim_{v^{\alpha }\rightarrow \infty }\left[ v^{\alpha }\varphi (v,\alpha
,\beta ,)\right] =\lim_{v^{\alpha }\rightarrow \infty }\left[ v^{\alpha }%
\frac{av^{-\beta }}{v^{2\alpha }+a^{2}}\right] =0=v^{-\beta }\varphi (0^{+}),
\end{equation*}

where $\varphi (0^{+})=0.$ This means that Theorem (\ref{th4}) holds.
\end{example}

\begin{example}
If we have $\varphi (t)=1,$ then $\Phi (v,\alpha ,\beta ,)=\frac{1}{%
v^{\alpha +\beta }}$ and%
\begin{equation*}
\lim_{v^{\alpha }\rightarrow 0}\left[ v^{\alpha }\varphi (v,\alpha ,\beta ,)%
\right] =\lim_{v^{\alpha }\rightarrow 0}\left[ \frac{v^{\alpha }}{v^{\alpha
+\beta }}\right] =v^{-\beta }=v^{-\beta }\lim_{t\rightarrow \infty }\varphi
(t),
\end{equation*}

where $\lim_{t\rightarrow \infty }\varphi (t)=1.$ By using the Theorem (\ref%
{th5}), we get the final value of $\varphi (t)$ that is $v^{-\beta }.$
\end{example}

\begin{example}
Consider the Dirac delta function
\begin{equation*}
\delta (t)=\left\{
\begin{array}{c}
\infty ,\qquad t=0, \\
0,\qquad t\neq 0.%
\end{array}%
\right.
\end{equation*}%
\

Then $\Phi (v,\alpha ,\beta ,)=\mathcal{S[\delta (}t\mathcal{)]=}1.$ In view
of Theorem (\ref{th5}), the final value of this function is
\begin{equation*}
\lim_{v^{\alpha }\rightarrow 0}\left[ v^{\alpha }\varphi (v,\alpha ,\beta ,)%
\right] =\lim_{v^{\alpha }\rightarrow 0}\left[ v^{\alpha }\right] =0
\end{equation*}

and
\begin{equation*}
\lim_{t\rightarrow \infty }v^{-\beta }\delta (t)=v^{-\beta
}\lim_{t\rightarrow \infty }\left\{
\begin{array}{c}
\infty ,\qquad t=0 \\
0,\qquad t\neq 0%
\end{array}%
\right. =0.
\end{equation*}

So our results are satisfied.
\end{example}

\section{Application}

\subsection{Fractional-order dynamical systems in control theory.}

Modern and effective techniques for the time-domain analysis of dynamical
systems involving fractional-order are wanted to solving systems of control
theory. As a modern generalization of the ordinary $PID$-controller, the
idea of $PI^{\lambda }D^{%
%TCIMACRO{\U{b5}}%
%BeginExpansion
{\mu}%
%EndExpansion
}$-controller, including fractional-order integrator and fractional-order
differentiator, has been lead to be a more efficient control dynamical
systems of fractional-order. In his series of papers and books (see
references of Podlubny's book \cite{IP}), successfully applied the
fractional-order controller to improve the so-called CRONE-controller
(Commande Robuste d'Ordre Non-En-trier controller) which is an enjoyable
example of the application of fractional derivatives in control theory. He
proves the advantage of the CRONE-controller compared to the classical
PID-controller and also showed that the $PI^{\lambda }D^{%
%TCIMACRO{\U{b5}}%
%BeginExpansion
{\mu}%
%EndExpansion
}$-controller has a better rendering record when applied for the control of
fractional-order systems than the classical $PID$-controller. In the time
domain, he described a dynamical system by the fractional-order differential
equation (FDE)%
\begin{equation}
\left[ \sum_{k=0}^{n}r_{n-k}\text{ }^{c}D_{0^{+}}^{\gamma _{n-k}}\right]
\varphi (t)=f(t),  \label{12}
\end{equation}%
where $\gamma _{n-k}>\gamma _{n-k-1\text{ }},(k=0,1,2,...,n)$ are arbitrary
real numbers, $r_{n-k}$ are arbitrary constants, and $^{c}D_{0^{+}}^{\gamma
_{n-k}}$ is the standard Caputo fractional derivative of order $\gamma
_{n-k}.$ Now, by the Sadik transform, we get%
\begin{equation*}
\left[ \sum_{k=0}^{n}r_{n-k}\text{ }v^{\left( \gamma _{n-k}\right) \alpha }%
\right] \phi (v,\alpha ,\beta )=F(v,\alpha ,\beta )
\end{equation*}

The transfer function of fractional differential equation Eq.(\ref{12}) is
given by%
\begin{equation*}
K_{n}(v,\alpha ,\beta )=\frac{F(v,\alpha ,\beta )}{\phi (v,\alpha ,\beta )}=%
\left[ \sum_{k=0}^{n}r_{n-k}\text{ }v^{\left( \gamma _{n-k}\right) \alpha }%
\right] ^{-1}.
\end{equation*}

The unit-impulse response $\varphi _{i}(t)$ of the system is defined by the
inverse Sadik transform of $K_{n}(v,\alpha ,\beta )$ so that%
\begin{equation*}
\varphi _{i}(t)=\mathcal{S}^{-1}\left[ K_{n}(v,\alpha ,\beta )\right]
=k_{n}(t),
\end{equation*}

and the unit-step response function is given by the integral of $k_{n}(t)$
so that%
\begin{equation*}
\varphi _{s}(t)=I_{0+}^{1}k_{n}(t).
\end{equation*}

We give a simple example to illustrate the above system

\begin{example} \label{ex1}
\begin{figure}[tbp]
\includegraphics[width=\linewidth]{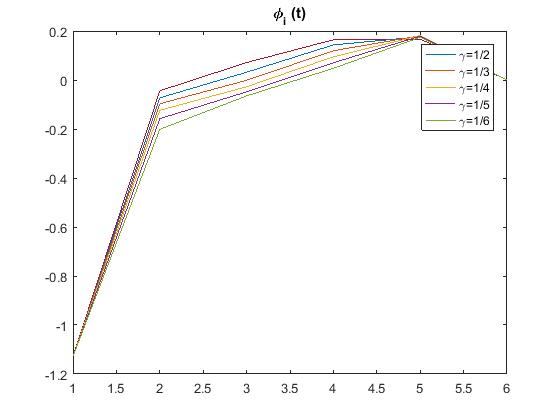}
\caption{Plot of unit-impulse response $\protect\varphi _{i}(t)$ in Example
\ref{ex1}}
\label{fig:boat2}
\end{figure}
We consider a simple fractional-order transfer function%
\begin{equation}
K_{2}(v,\alpha ,\beta )=(r\left( v^{\alpha }\right) ^{\gamma }+d)^{-1},\text{
\ }\gamma >0,  \label{eq12}
\end{equation}%
where $r$ and $d$ are arbitrary constants. The fractional differential
equation in the time domain identical to the transfer function (\ref{eq12})
is%
\begin{equation*}
r\text{ }^{c}D_{0^{+}}^{\gamma }\varphi (t)+d\varphi (t)=f(t),
\end{equation*}%
with the initial conditions%
\begin{equation*}
\varphi (0)=0.
\end{equation*}%
The unit-impulse response $\varphi _{i}(t)$ to the system is given by%
\begin{equation*}
\varphi _{i}(t)=\mathcal{S}^{-1}\left[ K_{2}(v,\alpha ,\beta )\right]
=k_{2}(t),
\end{equation*}%
where
\begin{equation*}
k_{2}(t)=\mathcal{S}^{-1}\left[ \frac{1}{r\left( v^{\alpha }\right) ^{\gamma
}+d}\right] =\frac{1}{r}\left[ t^{\gamma -1}E_{\gamma ,\gamma }(-\frac{d}{r}%
t^{\gamma })\right] ,
\end{equation*}

and the unit-step response to the system is%
\begin{eqnarray*}
\varphi _{s}(t) &=&\text{ }^{c}D_{0^{+}}^{-1}k_{2}(t)=I_{0+}^{1}\left( \frac{%
1}{r}\left[ t^{\gamma -1}E_{\gamma ,\gamma }(-\frac{d}{r}t^{\gamma })\right]
\right) \\
&=&\frac{1}{r}t^{\gamma }E_{\gamma ,\gamma +1}(-\frac{d}{r}t^{\gamma }).
\end{eqnarray*}

\begin{figure}[tbp]
\includegraphics[width=\linewidth]{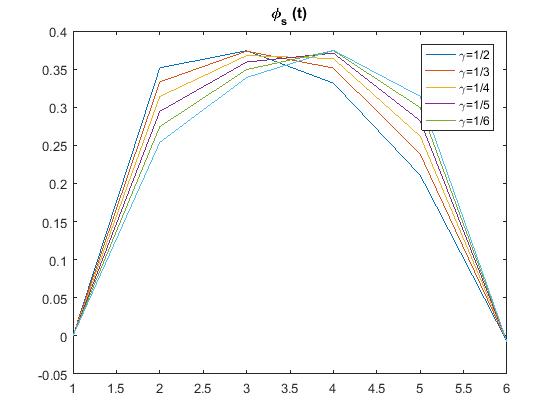} .
\caption{Plot of unit-step response $\protect\varphi _{s}(t)$ in Example \ref{ex1}}
\label{fig:boat3}
\end{figure}
\end{example}

\section*{conclusion}

There are a lot of the integral transforms of exponential type kernels, the
Sadik transform is new and very powerful among them and there are many
problems in engineering and applied sciences can be solved by Sadik
transform. So we have provided Sadik transform of Caputo fractional
derivative, and we also gave a sufficient condition to guarantee the
rationality of solving Caputo fractional differential equations by the Sadik
transform method. Moreover, The Sadik transform of time delay, initial value
theorem, and final value theorem are obtained. Some numerical examples to
justify our results are presented. An application of fractional-order
dynamical systems in control theory by Sadik transform is used. Finally, we
obtained some illustrative figures with the help of the Matlab software.

\end{document}